\newtheorem{theorem}{Theorem}[section]
\newtheorem{lemma}[theorem]{Lemma}
\theoremstyle{definition}
\newtheorem{example}[theorem]{Example}
\theoremstyle{remark}
\newtheorem{remark}[theorem]{Remark}
\numberwithin{equation}{section}
\numberwithin{table}{section}
\def\d{\mbox{\boldmath $d$}}
    \def\f{\mbox{\boldmath $f$}}
    \def\H{\mbox{\boldmath $H$}}
     \def\n{\mbox{\boldmath $n$}}
    \def\u{\mbox{\boldmath $u$}}
    \def\J{\mbox{\boldmath $J$}}
    \def\v{\mbox{\boldmath $v$}}
    \def\w{\mbox{\boldmath $w$}}
    \def\l{\mbox{\boldmath $l$}}
     \def\0{\mbox{\boldmath $0$}}
    \def\u{{\bm u}}
\def\v{{\bm v}}
\def\H{{\bm H}}
\def\J{{\bm J}}
\def\d{{\rm d}}
\def\R{{\bm R}}
\begin{document}

\title{Optimal error estimates of a Crank--Nicolson finite element projection method for magnetohydrodynamic equations}

\author{Cheng Wang}\address{Mathematics Department, University of Massachusetts, North Dartmouth, MA 02747, USA.  \\
    \email{cwang1@umassd.edu}.
    The first author's research is supported in part by NSF DMS-2012669. 	}

\author{Jilu Wang}\address{Beijing Computational Science Research Center, Beijing 100193, China.
	 \email{jiluwang@csrc.ac.cn}.
    The second author's research is supported in part by NSFC-12071020 and NSFC-U1930402.}

\author{Zeyu Xia}\address{School of Mathematics Sciences,	University of Electronic Science and Technology of China,
	Chengdu 611731, China.
    \email{zeyuxia@std.uestc.edu.cn}. The third author's research is supported in part by NSFC-11871139.}

\author{Liwei Xu}\address{School of Mathematics Sciences,	University of Electronic Science and Technology of China,
	Chengdu 611731, China.  Corresponding author: \email{xul@uestc.edu.cn}. The fourth author's research is supported in part by NSFC-11771068 and NSFC-12071060.}

\begin{abstract}
In this paper, we propose and analyze a fully discrete finite element projection method for the magnetohydrodynamic (MHD) equations.
A modified Crank--Nicolson method and the Galerkin finite element method are used to discretize the model in time and space, respectively, and appropriate semi-implicit treatments are applied to the fluid convection term and two coupling terms.
These semi-implicit approximations result in a linear system with variable coefficients for which the unique solvability can be proved theoretically.
In addition, we use a second-order decoupling projection method of the Van Kan type \cite{vankan1986} in the Stokes solver, which computes the intermediate velocity field based on the gradient of the pressure from the previous time level, and enforces the incompressibility constraint via the Helmholtz decomposition of the intermediate velocity field. The energy stability of the scheme is theoretically proved, in which the decoupled Stokes solver needs to be analyzed in details.
Error estimates are proved in the discrete $L^\infty(0,T;L^2)$ norm for the proposed decoupled finite element projection scheme. 
Numerical examples are provided to illustrate the theoretical results.
\end{abstract}

\subjclass[Mathematics Subject Classification]{ 35K20, 65M12, 65M60, 	76D05}

\keywords{magnetohydrodynamic equations, modified Crank--Nicolson scheme, finite element, unique solvability, unconditional energy stability, error estimates}
\maketitle

\section*{Introduction}

The magnetohydrodynamic equations have been widely applied into metallurgy and liquid-metal processing, and the numerical solutions are of great significance in practical scientific and engineering applications; see~\cite{app1} and~\cite{app2}. Such an MHD system could be formulated as \cite{Shercliff1965}
\begin{align}
&    \mu\partial_t\H+\sigma^{-1}\nabla\times(\nabla\times\H)
        -\mu\nabla\times(\u\times\H)=\sigma^{-1}\nabla\times {\J},
        \label{PDEa}\\
&
    \partial_t\u+\u\cdot\nabla\u-\nu\Delta\u+\nabla p
        =\f-\mu\H\times(\nabla\times\H),
        \label{PDEb}\\
&
    \nabla\cdot\u=0,\label{PDEc}
\end{align}
over $\Omega\times(0,T]$, where $\Omega$ is a bounded and convex polyhedral domain in $\mathbb{R}^3$ (polygonal domain in $\mathbb{R}^2$). In the above system, $\u$, $\H$ and $p$ denote the velocity field, the magnetic filed, and the pressure, respectively; ${\J}$ and $\f$ are the given source terms (${\J}$ denotes a scalar function in $\mathbb{R}^2$); $\sigma$ denotes the magnetic Reynolds number, $\nu$ denotes the viscosity of the fluid, and $\mu=M^2\nu\sigma^{-1}$, where $M$ is the Hartman number. The initial data and boundary conditions are given by
\begin{align}
&\H|_{t=0}=\H_0,\quad\u|_{t=0}=\u_0,
\quad\quad
\mbox{in }\Omega,  \label{initial data-1}\\
&
  \H\times\n={\bf{0}}, \quad\ \
    \u={\bf{0}},
\qquad\quad\quad \,
\mbox{on }\partial\Omega\times(0,T] . \label{BC-1}
\end{align}
It is assumed that the initial data satisfies
\begin{equation}
\nabla\cdot \H_0=\nabla\cdot\u_0=0.
  \label{regularity_source}
\end{equation}
By taking the divergence of \eqref{PDEa}, one can easily get $\mu\partial_t\nabla\cdot\H=0$, which together with the above divergence-free initial condition
implies that
\begin{align}\label{div-H=0}
\nabla\cdot\H=0.
\end{align}
The existence and uniqueness of the weak solution for this problem has been theoretically proved in \cite{Max1991,PDE1}. More regularity analysis of the MHD system could be referred in~\cite{PDE7,PDE5,PDE6,PDE4}, etc.

There have been many existing works on the numerical approximations for the incompressible MHD system. In bounded and convex domains, the solutions of the MHD model are generally in $[H^1(\Omega)]^d$ ($d=2,3,$ denotes the dimension of $\Omega$) and therefore people often use $H^1$-conforming finite element methods (FEMs) to solve the MHD equations numerically.  For example, Gunzburger, Meir, and Peterson \cite{Max1991} used $H^1$-conforming FEMs for solving the stationary incompressible MHD equations with an optimal error estimate being established. Later, He developed $H^1$-conforming FEMs in \cite{FEMr3} for solving the time-dependent MHD equations and proved error estimates of the numerical scheme. More works on $H^1$-conforming FEMs for the MHD equations can be found in
\cite{FEMr1,Gerbeau2000A,FEMr2,FEMr4,main}.

While the spatial approximation has always been important, the temporal discretization also plays a significant role for solving the MHD system. There have been quite a few existing stability and convergence analyses for the first-order temporally accurate numerical schemes~\cite{first,FEMr3,FDM1,ratecon,FDM2}.  In most of these works, the stability and convergence analyses have been based on a Stokes solver at each time step, i.e., the computation of the pressure gradient has to be implemented with the incompressibility constraint being enforced, which in turn leads to a non-symmetric linear system, and the computation costs turn out to be extremely expensive. To overcome this difficulty, some ``decoupled" techniques have been introduced. In \cite{shen2016A}, Zhao, Yang, Shen, and Wang dealt with a binary hydrodynamic phase field model of mixtures of nematic liquid crystals and viscous fluids by designing a decoupled semi-discrete scheme, which is linear, first-order accurate in time, and unconditionally energy stable. In particular, a pressure-correction scheme \cite{Guermond2006} was used so that the pressure could be explicitly updated in the velocity equation by introducing an intermediate function and thus two sub-systems are generated. In \cite{shen2015}, Liu, Shen, and Yang proposed a first-order decoupled scheme for a phase-field model of two-phase incompressible flows with variable density based on a ``pressure-stabilized" formulation, which treats the pressure term explicitly in the velocity field equation, and only requires a Poisson solver to update the pressure. These works have mainly focused on the design of energy-preserving schemes without presenting the convergence analysis. Meanwhile, the first-order temporal accuracy may not be sufficient in the practical computations of the MHD system, and therefore higher-order temporal numerical approximations have been highly desired.

In the development of temporally higher-order methods, a conditionally stable second-order backward difference formula (BDF2) algorithm was proposed in \cite{FEM3} for a reduced MHD model at small magnetic Reynolds number, in which the coupling terms were explicitly updated, and other terms were implicitly computed.
An unconditionally stable BDF2 method was proposed in \cite{Heister2016}, where the method was proved convergent with optimal order.
In~\cite{second2}, a second-order scheme with Newton treatment of the nonlinear terms was proposed, where the unconditional stability and optimal error estimates were obtained. Recently, a fully discrete Crank--Nicolson (CN) scheme was studied in
\cite{FEMr5}, where the  unconditional energy stability and convergence (without error estimates) were proved.
For efficient large scale numerical simulations of incompressible flows, high-order projection methods are desired.
In \cite{Shen1996}, Shen presented rigorous error analysis of second-order Crank--Nicolson projection methods of the Van Kan type \cite{vankan1986}, i.e., second-order incremental pressure-correction methods, for the unsteady incompressible Navier--Stokes equations. By interpreting the respective projections schemes as second-order time discretizations of a perturbed system which approximates the Navier--Stokes equations, optimal-order convergence in the discrete $L^2(0,T;L^2)$ norm was proved for the semi-implicit schemes.
Later, Guermond \cite{Guermond1999} proved optimal error estimates in the discrete $L^2(0,T;L^2)$ norm for the fully discrete case with BDF2 approximation in time.
However, whether second-order incremental pressure-correction methods have optimal convergence in the discrete $L^\infty(0,T;L^2)$ norm remains open for both Navier--Stokes and MHD equations.

In this article, we fill in the gap between numerical computation and rigorous error estimates for a Crank--Nicolson finite element projection method for the MHD model.
We first propose a fully discrete decoupled finite element projection method for the MHD system~\eqref{PDEa}-\eqref{PDEc}, and then the following properties are theoretically established: unique solvability, unconditional energy stability, and convergence analysis. In particular, a modified Crank--Nicolson method with an implicit Adams--Moulton interpolation in the form of $\frac34\H_h^{n+1}+\frac14\H_h^{n-1}$, instead of the standard Crank--Nicolson approximation, is applied to discretize the magnetic diffusion term. Such a technique leads to a stronger stability property of the numerical scheme, as will be demonstrated in the subsequent analysis.
A second-order incremental pressure-correction method is used to decouple the computation of velocity and pressure.
Precisely, an intermediate velocity function $\widehat\u_h^{n+1}$ is introduced in the numerical scheme, and its computation is based on the pressure gradient at the previous time step. After solving the intermediate velocity field, we decompose it into the divergence-free subspace by using the Helmholtz decomposition. This yields the velocity field $\u_h^{n+1}$ at the same time level.
In the error analysis, we first introduce an intermediate projection of the velocity, i.e., $\widehat{\R_h\u^{n+1}}$ as introduced in~\eqref{Rh-hat}-\eqref{Rh-hat2}, with which, estimate of an intermediate error for the velocity is obtained. With such estimate and rigorous analysis of the discrete gradient of the Stokes projection,  
second-order convergence in time is proved in the discrete $L^\infty(0,T;L^2)$ norm for the velocity and magnetic fields, independently of the mesh size $h$ in the case $\nabla p|_{\partial\Omega}=0$ and dependently on $h^{-\frac12}$ in the case $\nabla p|_{\partial\Omega}\neq 0$, respectively.
The techniques introduced in this paper would also work for other related projection methods.


This paper is organized as follows. In Section~\ref{sec:formulation}, a variational formulation and some preliminary results are reviewed. The fully discrete finite element scheme is introduced in Section~\ref{sec:numerical scheme}, and its unconditional energy stability is established in details. Section \ref{sec-error} provides the rigorous proof of the unique solvability and error estimates. Several numerical examples are presented in Section~\ref{sec:numerical results}. Finally, some concluding remarks are provided in Section~\ref{sec:conclusion}.

\section{Variational formulation and stability analysis} \label{sec:formulation}
\setcounter{equation}{0}
For $k\ge 0 $ and $1\le p\le \infty$, let $W^{k,p}(\Omega)$ be the conventional Sobolev space of functions defined on $\Omega$, with abbreviations $L^{p}(\Omega)=W^{0,p}(\Omega)$ and $H^{k}(\Omega)=W^{k,2}(\Omega)$. Then, we denote by $W^{1,p}_0(\Omega)$ the space of functions in $W^{1,p}(\Omega)$ with zero traces on the boundary $\partial\Omega$, and denote $H^1_0(\Omega)=W^{1,2}_0(\Omega)$. The corresponding vector-valued spaces are
\begin{align*}
&{\bf L}^p(\Omega)=[L^p(\Omega)]^d ,
\qquad\qquad\qquad {\bf W}^{k,p}(\Omega)=[W^{k,p}(\Omega)]^d , \\
&{\bf W}^{1,p}_0(\Omega)=[W^{1,p}_0(\Omega)]^d ,
\qquad\quad\ \ \ {\bf H}^1_0(\Omega)={\bf W}^{1,2}_0(\Omega) , \\
&\ring{\bf H}^1(\Omega)=\{\v\in{\bf H}^1(\Omega):\v\times\n|_{\partial\Omega}=0 \},
\end{align*}
where $d=2,3,$ denotes the dimension of $\Omega$.
As usual, the inner product of $L^2(\Omega)$ is denoted by $(\cdot,\cdot)$.

With the above notations, it could be seen that the exact solution $(\H,\u,p)$ of \eqref{PDEa}-\eqref{PDEc} satisfies
\begin{align}
&    (\mu\partial_t\H,\w)+(\sigma^{-1}\nabla\times\H,\nabla\times\w)
        -(\mu\u\times\H,\nabla\times\w)=(\sigma^{-1}\nabla\times \J,\w), \label{weaksolution-1}\\
&   (\partial_t\u,\v)+(\nu\nabla\u,\nabla\v)+b(\u,\u,\v) -(p,\nabla\cdot\v)
= (\f,\v)-(\mu\H\times(\nabla\times\H),\v), \label{weaksolution-2}\\
&(\nabla\cdot\u,q)=0,
    \label{weaksolution-3}
\end{align}
for any test functions $(\w,\v,q)\in( \ring{\bf H}^1(\Omega), {\bf H}^1_0(\Omega), L^2(\Omega))$, where we have defined the trilinear form $b(\cdot,\cdot,\cdot)$ as
\begin{equation}
\begin{aligned}
b(\u,\v,\w)&:=(\u\cdot\nabla\v,\w)+\frac12((\nabla\cdot\u)\v,\w)  \\
&=\frac12\big[(\u\cdot\nabla\v,\w) - (\u\cdot\nabla\w,\v)\big],
\quad \forall\u,\v,\w\in{\bf H}_0^1(\Omega) ,
\end{aligned}
\label{def-b}
\end{equation}
and $\u\cdot\v$ denotes the Euclidean scalar product in $\mathbb{R}^d$. Notice that the trilinear form $b(\cdot,\cdot,\cdot)$ is skew-symmetric with respect to its last two arguments,  so that we further have
\begin{align*}
b(\u,\v,\v)=0,
\quad
\forall\u, \v,\w\in{\bf H}_0^1(\Omega).
\end{align*}

The energy stability of the continuous system \eqref{weaksolution-1}-\eqref{weaksolution-3} could be obtained in a straightforward manner.
By taking $\w=\H,\v=\u$ in \eqref{weaksolution-1}-\eqref{weaksolution-3} and adding the resulting equations together, we get
\begin{equation*}
 \begin{aligned}
    &
    \frac{\mu}{2}\frac{\d}{\d t}\|\H\|_{L^2}^2 + \sigma^{-1}\|\nabla\times\H\|_{L^2}^2 + \frac{1}{2}\frac{\d}{\d t}\|\u\|_{L^2}^2 +
    \nu\|\nabla\u\|_{L^2}^2 \\
    &= (\J,\sigma^{-1}\nabla\times\H)+(\f,\u) \\
    &\le \frac{1}{4\sigma}\|\J\|_{L^2}^2+\sigma^{-1}\|\nabla\times\H\|_{L^2}^2
        +\frac{1}{4\varepsilon}\|\f\|_{L^2}^2+\varepsilon\|\u\|_{L^2}^2 ,
 \end{aligned}
\end{equation*}
where $\varepsilon$ is an arbitrary constant.
Due to the zero boundary condition of $\u$ in \eqref{BC-1}, we have $\|\u\|_{L^2}^2\leq C\|\nabla\u\|_{L^2}^2$.
Since $\varepsilon$ can be arbitrarily small, we obtain the following energy estimate
\begin{equation}
    \frac{\mu}{2}\frac{\d}{\d t}\|\H\|_{L^2}^2  + \frac{1}{2}\frac{\d}{\d t}\|\u\|_{L^2}^2 \\
    \leq \frac{1}{4\sigma}\|\J\|_{L^2}^2+\frac1{4\varepsilon}\|\f\|_{L^2}^2.
\end{equation}
If the sources terms $\J={\bm f}={\bm 0}$, we further get
\begin{align}
 \frac{\mu}{2}\frac{\d}{\d t}\|\H\|_{L^2}^2  + \frac{1}{2}\frac{\d}{\d t}\|\u\|_{L^2}^2
 \leq0,
\end{align}
which implies the total energy is decaying.

\section{Numerical method and theoretical results} \label{sec:numerical scheme}

\subsection{Numerical method}
In this subsection, we propose a fully discrete decoupled finite element method for solving the system \eqref{PDEa}-\eqref{PDEc}. Let $\Im_h$ denote a quasi-uniform partition of $\Omega$ into tetrahedrons $K_j$ in $\mathbb{R}^3$ (or triangles in $\mathbb{R}^2$), $j=1,2,\dots, M$, with mesh size $h=\max_{1\le j\le 	M}\{\mbox{diam}K_j\}$.
To approximate $\u$ and $p$ in the system \eqref{PDEa}-\eqref{PDEc}, we introduce the Taylor-Hood finite element space ${\bf X_h}\times M_h$, defined by
\begin{align*}
      &{\bf X}_h=\{\l_h\in{\bf H}_0^1(\Omega):\l_h|_{K_j}\in{\bf P}_{r}(K_j)\}, \\
    &M_h=\{q_h\in L^2(\Omega):q_h|_{K_j}\in P_{r-1}(K_j),\ \mbox{$\int_\Omega q_h \d x=0$}\},
\end{align*}
for any integer $r\ge 2$, where $P_r(K_j)$ is the space of polynomials with degree $r$ on $K_j$ for all $K_j\in\Im_h$ and ${\bf P}_{r}(K_j):=[P_r(K_j)]^d$. To approximate the magnetic field $\H$, we introduce the finite element space ${\bf S}_h$ defined by
\begin{align*}
&{\bf S}_h=\{\w_h\in \ring{\bf H}^1(\Omega):\w_h|_{K_j}
        \in {\bf P}_{r}(K_j)\} .
\end{align*}

Let $\{t_n=n\tau\}_{n=0}^N$ denote a uniform partition of the time interval $[0,T]$,
with a step size $\tau=T/N$, and $v^n=v(x,t_n)$.
For any sequences $\{v^n\}_{n=0}^N$ and $\{\widehat v^n\}_{n=0}^N$, we define
\begin{align*}
\widecheck v^{n+\frac12}:=\frac34 v^{n+1}+\frac14 v^{n-1}, \quad
\overline v^{n+\frac12}:=\frac12 \widehat v^{n+1}+\frac12 v^{n}, \quad
\widetilde v^{n+\frac12}:=\frac32 v^{n}-\frac12 v^{n-1}.
\end{align*}
Then, a  fully discrete decoupled Crank--Nicolson finite element projection method for the incompressible MHD equations \eqref{PDEa}-\eqref{PDEc} is formulated as: find
$(\H_h^{n+1},\u_h^{n+1},\widehat{\u}_h^{n+1},p_h^{n+1})\in ({\bf S}_h,{\bf X}_h,{\bf X}_h,M_h)$ such that
\begin{align}
&
\mu\bigg(\frac{\H^{n+1}_h-\H^n_h}{\tau},\w_h\bigg)
+\sigma^{-1}\Big(\nabla\times\widecheck\H_h^{n+\frac12},\nabla\times\w_h \Big)
+\sigma^{-1}\Big(\nabla\cdot\widecheck\H_h^{n+\frac12},\nabla\cdot\w_h \Big) \nonumber\\
&\hspace{1.7in}
-\mu\Big(\overline\u_h^{n+\frac12}\times\widetilde\H_h^{n+\frac12} ,\nabla\times\w_h\Big)
=\sigma^{-1}\Big(\nabla\times \J^{n+\frac12},\w_h\Big) ,  \label{scheme-a} \\
&
\bigg(\frac{\widehat{\u}_h^{n+1}-\u_h^n}{\tau},\v_h\bigg)
+\nu\Big(\nabla\overline\u_h^{n+\frac12},\nabla\v_h\Big)
+b\Big(\widetilde\u_h^{n+\frac12}, \overline\u_h^{n+\frac12},\v_h \Big)
-\Big(p_h^n,\nabla\cdot\v_h\Big) \nonumber\\
&\hspace{2.1in}
+\mu\Big(\widetilde\H_h^{n+\frac12}\times(\nabla\times\widecheck\H_h^{n+\frac12}),\v_h\Big)
=\Big(\f^{n+\frac12},\v_h\Big), \label{scheme-b} \\
&\bigg(\frac{{\u}_h^{n+1}-\widehat{\u}_h^{n+1}}{\tau},\l_h\bigg)-\frac12\Big(p_h^{n+1}-p_h^n,\nabla\cdot\l_h\Big)=0,  \label{scheme-c} \\
& \Big(\nabla\cdot\u_h^{n+1},q_h\Big)=0,   \label{scheme-d}
\end{align}
for any $(\w_h,\v_h,\l_h,q_h) \in ({\bf S}_h,{\bf X}_h,{\bf X}_h,M_h)$ and $n=1,2,\dots,N-1$.
Here we have added a stabilization term $\sigma^{-1}(\nabla\cdot\widecheck\H_h^{n+\frac12},\nabla\cdot\w_h)$ to \eqref{scheme-a}. This is consistent with \eqref{weaksolution-1} in view of \eqref{div-H=0}.

In this paper, it is assumed that the system \eqref{PDEa}-\eqref{PDEc} admits a unique solution
satisfying
\begin{align}
&\|\H_{ttt}\|_{L^\infty(0,T;L^2)}
+\|\H_{tt}\|_{L^\infty(0,T;H^1)}
+\|\H_t\|_{L^\infty(0,T;H^{r+1})}
+\|\u_{ttt}\|_{L^\infty(0,T;L^2)}
\nonumber\\
&\quad
+\|\u_{tt}\|_{L^\infty(0,T;H^1)}
+\|\u_t\|_{L^\infty(0,T;H^{r+1})}
+\|p_{tt}\|_{L^\infty(0,T;L^2)}
+\|p_{t}\|_{L^\infty(0,T;H^r)}
\le
K .
\label{reg-asp}
\end{align}
Here, the subscripts of $\H,\u,p$ denote the partial derivative to variable $t$.

Next, we present our main results, i.e.,  error estimates for scheme \eqref{scheme-a}-\eqref{scheme-d}, in the following theorem.

\begin{theorem}\label{thm-error}
Suppose that the system \eqref{PDEa}-\eqref{PDEc} has a unique solution $(\H,\u,p)$ satisfying \eqref{reg-asp}. Then there exist positive constants $\tau_0$ and $h_0$ such that when $\tau<\tau_0$, $h<h_0$, and $\tau=\mathcal{O}(h)$, the fully discrete decoupled FEM system \eqref{scheme-a}-\eqref{scheme-d} admits a unique solution $(\H_h^n,\u_h^n,p_h^n)$, $n=2,3,\dots,N$, which satisfies that
\begin{align}
&\max_{2\le n\le N}\Big(\|\H_h^n-\H^n\|_{L^2}+\|\u_h^n-\u^n\|_{L^2} \Big)
\le
C_0(\ell_h\tau^2+h^{r+1}), \label{est-error}\\
&
\bigg( \tau\sum_{n=2}^{N} \big(\|\nabla\times(\H_h^{n}-\H^{n})\|_{L^2}^2+  \|\nabla(\overline\u_h^{n-\frac12}-\overline\u^{n-\frac12})\|_{L^2}^2 \big)\bigg)^\frac12
\le
C_0(\ell_h\tau^2+h^{r}),
\label{est-error2}
\end{align}
where $\overline{\u}^{n-\frac12}:=\frac12(\widehat{\u}^n+\u^{n-1})$ and $\widehat{\u}^n:=\u^n$; 
 $\ell_h=1$ if $\nabla p|_{\partial\Omega}={\bm 0}$, otherwise $\ell_h=h^{-\frac12}$;  $C_0$ is a positive constant independent of $\tau$ and $h$.
\end{theorem}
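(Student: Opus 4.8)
The plan is to establish \eqref{est-error}--\eqref{est-error2} by a consistency-plus-stability (energy) argument carried out on projected errors, closed by a discrete Gronwall inequality and an induction in $n$. First I would introduce three projections onto the finite element spaces: a Stokes projection $\R_h\u$ (together with an associated discrete pressure) for the velocity, a Ritz-type projection for the magnetic field adapted to the form $(\nabla\times\cdot,\nabla\times\cdot)+(\nabla\cdot,\nabla\cdot)$ on $\mathbf S_h$, and --- crucially --- the \emph{intermediate} velocity projection $\widehat{\R_h\u^{n+1}}$, designed so as to reproduce at the continuous level the splitting relation between $\widehat\u_h^{n+1}$ and $\u_h^{n+1}$ that the projection steps \eqref{scheme-c}--\eqref{scheme-d} impose discretely. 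Writing each error as projection error plus discrete error, e.g. $\u_h^n-\u^n=(\u_h^n-\R_h\u^n)+(\R_h\u^n-\u^n)=:\e_u^n+\eta_u^n$, the projection parts $\eta$ are controlled optimally ($h^{r+1}$ in $L^2$, $h^r$ in $H^1$) by standard approximation theory, so the whole analysis reduces to bounding the discrete errors $\e_u^n,\widehat\e_u^{n+1},\e_H^n$.

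\textbf{Error equations and consistency.} Next I would subtract the scheme \eqref{scheme-a}--\eqref{scheme-d} from the weak formulation \eqref{weaksolution-1}--\eqref{weaksolution-3}, evaluated at the projections and tested against discrete functions, to obtain the discrete error equations with local truncation terms on the right-hand side. A short Taylor expansion shows that the backward difference $\tau^{-1}(v^{n+1}-v^n)$, the average $\overline v^{n+\frac12}$, the extrapolation $\widetilde v^{n+\frac12}$, and in particular the modified Crank--Nicolson weighting $\widecheck v^{n+\frac12}=\tfrac34 v^{n+1}+\tfrac14 v^{n-1}$ are all second-order consistent at $t_{n+1/2}$ (the first-order term in $\widecheck v^{n+\frac12}-v(t_{n+1/2})$ cancels, leaving $\mathcal O(\tau^2)$), so every truncation term is $\mathcal O(\tau^2)$ under \eqref{reg-asp}. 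A key algebraic reduction is to eliminate the intermediate velocity: adding \eqref{scheme-c} (with $\l_h=\v_h$) to \eqref{scheme-b} combines the time-derivatives into $\tau^{-1}(\u_h^{n+1}-\u_h^n)$ and the two pressure contributions into the single Crank--Nicolson pressure $-(\overline p_h^{n+\frac12},\nabla\cdot\v_h)$ with $\overline p_h^{n+\frac12}=\tfrac12(p_h^{n+1}+p_h^n)$. This exhibits the projection scheme as a second-order discretization of a perturbed MHD system, isolating the splitting perturbation $\widehat\u_h^{n+1}=\u_h^{n+1}-\tfrac\tau2\nabla(p_h^{n+1}-p_h^n)$ as the only deviation from the coupled Crank--Nicolson method.

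\textbf{Energy estimate.} I would then test the magnetic error equation with $\widecheck\e_H^{n+\frac12}$ (matching the modified-CN diffusion, which is where the stronger stability of the $\tfrac34,\tfrac14$ weighting is used) and the combined velocity error equation with $\overline\e_u^{n+\frac12}$, and sum. The diffusion and viscosity terms produce the coercive quantities $\|\nabla\times\e_H\|_{L^2}^2$ and $\|\nabla\overline\e_u\|_{L^2}^2$ appearing on the left of \eqref{est-error2}; the skew-symmetry $b(\u,\v,\v)=0$ removes the leading convection term, leaving only contributions in the extrapolation error $\widetilde\e_u^{n+\frac12}$; and the two coupling terms (the Lorentz force and the $\u\times\H$ term), being treated semi-implicitly through $\widetilde\H^{n+\frac12},\widetilde\u^{n+\frac12}$, are absorbed by Cauchy--Schwarz, Sobolev embedding and Young's inequality. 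Controlling these nonlinear products needs $L^\infty$ bounds on the numerical solution, which I would secure by induction: assuming \eqref{est-error} up to level $n$, inverse inequalities together with $\tau=\mathcal O(h)$ give uniform $\mathbf W^{1,\infty}$ control of $\u_h^k,\H_h^k$ for $k\le n$, which feeds the energy estimate and, via discrete Gronwall, reproduces the bound at level $n+1$, closing the induction.

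\textbf{Main obstacle: the pressure splitting and the factor $\ell_h$.} The crux --- and the reason the discrete $L^\infty(0,T;L^2)$ rate had remained open --- is the treatment of the incremental pressure-correction term $\tfrac\tau2\nabla(p_h^{n+1}-p_h^n)$ and of the discrete gradient of the Stokes projection. Summation by parts in time turns this into a telescoping pressure-increment contribution whose control requires a sharp bound on $\|\nabla(p_h^{n+1}-p_h^n)\|_{L^2}$, essentially on $\nabla p_t$; this is exactly what the intermediate projection $\widehat{\R_h\u^{n+1}}$ supplies, in combination with the discrete inf--sup stability of the Taylor--Hood pair. The pressure-correction step implicitly imposes a homogeneous Neumann condition on $p_h^{n+1}-p_h^n$, which is consistent only when $\nabla p|_{\partial\Omega}=\mathbf 0$; in that case the boundary contributions vanish and one obtains the clean $\mathcal O(\tau^2)$ rate ($\ell_h=1$). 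Otherwise a numerical boundary layer in the pressure increment survives, and estimating its discrete gradient against the interior $L^2$ norm costs a trace/inverse inequality losing a factor $h^{-1/2}$, which is precisely the origin of $\ell_h=h^{-\frac12}$. I expect this boundary-layer estimate, carried out uniformly in $n$ so as to yield the $L^\infty$ (rather than merely $L^2$) temporal norm, to be the most delicate part of the argument; the unique solvability of each linear step then follows from the same energy identity (the homogeneous problem has vanishing energy, hence only the trivial solution) together with the inf--sup condition controlling the pressure.
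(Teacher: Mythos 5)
Your proposal reproduces the architecture of the paper's proof: the same three projections (including the intermediate velocity projection $\widehat{\R_h\u^{n+1}}$ of \eqref{Rh-hat2}), the same error equations, the same test functions $\widecheck e_{\H}^{n+\frac12}$ and $\overline e_\u^{n+\frac12}$, induction combined with inverse inequalities under $\tau=\mathcal{O}(h)$, and a closing discrete Gronwall argument; your ``perturbed system'' reformulation is algebraically equivalent to the paper's split treatment of \eqref{error-b}--\eqref{error-c}. However, two of your concrete claims would fail as stated.

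First, the induction hypothesis $\|e_\u^m\|_{L^2}+\|e_\H^m\|_{L^2}\le C(\ell_h\tau^2+h^{r+1})$ together with inverse inequalities does \emph{not} give uniform ${\bf W}^{1,\infty}$ control of $\u_h^k,\H_h^k$: passing from $L^2$ to $W^{1,\infty}$ costs $h^{-1-\frac{d}{2}}$, and $h^{-1-\frac{d}{2}}(\ell_h\tau^2+h^{r+1})\le Ch^{-1-\frac{d}{2}}h^{\frac32}$ diverges for $d=2,3$. What does follow, and what the paper derives in \eqref{bound-H}--\eqref{bound-u} and actually uses, is $\|\H_h^m\|_{W^{1,3}}+\|\u_h^m\|_{L^\infty}\le K+1$. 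This matters because your plan to absorb the coupling terms ``by Cauchy--Schwarz, Sobolev embedding and Young'' breaks down with only that control: the leading error terms $\mu\big(\overline e_\u^{n+\frac12}\times\widetilde\H_h^{n+\frac12},\nabla\times\widecheck e_{\H}^{n+\frac12}\big)$ and $-\mu\big(\widetilde\H_h^{n+\frac12}\times(\nabla\times\widecheck e_{\H}^{n+\frac12}),\overline e_\u^{n+\frac12}\big)$ are each a product of the two dissipation quantities with an $O(1)$ weight ($\|\overline e_\u^{n+\frac12}\|_{L^6}\|\widetilde\H_h^{n+\frac12}\|_{L^3}\|\nabla\times\widecheck e_{\H}^{n+\frac12}\|_{L^2}$), so Young's inequality cannot make the coefficient of either $\|\nabla\overline e_\u^{n+\frac12}\|_{L^2}^2$ or $\|\nabla\times\widecheck e_{\H}^{n+\frac12}\|_{L^2}^2$ small; one would first need $\|\widetilde\H_h^{n+\frac12}\|_{L^\infty}\le C$, which requires a separate inverse-inequality argument. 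The paper sidesteps the issue entirely: these two terms are kept exact in \eqref{est-eH-2} and \eqref{est-eu-2} and cancel identically, by cyclic invariance of the scalar triple product, when the two energy inequalities are summed in Step 5. Either repair (the exact cancellation, or an honest $L^\infty$ bound on $\H_h^m$ obtained from $h^{-\frac{d}{2}}\cdot h^{\frac32}\le C$) must replace your stated argument.

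Second, you locate the $h^{-\frac12}$ loss in the numerical pressure increment $p_h^{n+1}-p_h^n$ and an implicit Neumann condition, but that object never needs to be estimated: in the paper's Step 3 the term $-(e_p^n,\nabla\cdot\overline e_\u^{n+\frac12})$ telescopes exactly into $\frac{\tau}{8}\big(\|\nabla_he_p^{n+1}\|_{L^2}^2-\|\nabla_he_p^{n}\|_{L^2}^2\big)$ minus a positive term, with no loss whatsoever. The factor $\ell_h$ enters through the \emph{exact} pressure: the viscous term applied to $\widehat{\R_h\u^{n+1}}-\R_h\u^{n+1}=-\frac{\tau}{2}\nabla_h(R_hp^{n+1}-R_hp^n)$ yields the term $\mathcal{I}_2$, whose control requires the approximation-theoretic bound $\|\nabla(\nabla_hR_h\partial_tp)\|_{L^2}\le C\ell_h$ of Lemma \ref{lem}. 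There, $\ell_h=1$ when $\nabla p|_{\partial\Omega}={\bm 0}$ because the $H^1$-stability of the $L^2$ projection applies to $\nabla\partial_tp$ with zero trace, while otherwise a cut-off layer of width $h$ plus inverse estimates produce exactly $h^{-\frac12}$. Your boundary-layer intuition is right in spirit, but it must be attached to the Stokes projection of the exact pressure rather than to the scheme's pressure increments; as written, your outline misses the one lemma that actually generates $\ell_h$.
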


\vspace{.1in}
\begin{remark}
\rm
One feature of the proposed numerical scheme~\eqref{scheme-a}-\eqref{scheme-d} is associated with its decoupled nature in the Stokes solver.
Motivated by the second-order projection method of the Van Kan type \cite{vankan1986}, i.e., second-order incremental pressure-correction method,
we introduce an intermediate velocity $\widehat{\u}_h^{n+1}$ to decouple the problem, and thus build two systems and both of them consist of two unknowns.
More precisely, we first obtain $\H^{n+1}_h$ and $\widehat{\u}^{n+1}_h$ through \eqref{scheme-a}-\eqref{scheme-b}, while treating the gradient of pressure explicitly. Then, we substitute $\widehat{\u}_h^{n+1}$ into~\eqref{scheme-c}-\eqref{scheme-d}, so that $p_h^{n+1}$ and $\u_h^{n+1}$ could be efficiently computed via solving a Darcy problem. In comparison with the classical coupled solver that the full system contains three unknowns $\H_h^{n+1}$, $\u_h^{n+1}$ and $p_h^{n+1}$, which have to be solved simultaneously, such a decoupled approach will greatly improve the efficiency of the numerical scheme.

There have been extensive analyses of decoupled numerical schemes for incompressible Navier--Stokes equations; see the pioneering works of A.~Chorin~\cite{Chorin1968}, R.~Temam~\cite{Temam1969}, and many other related studies~\cite{BCG1989,E1995,E2002,Kim1985,STWW2003,Wang2000}, etc.
In this work, second-order convergence in time is proved in the discrete $L^\infty(0,T;L^2)$ norm for the velocity and magnetic fields, independently of the mesh size $h$ in the case $\nabla p|_{\partial\Omega}=0$ and dependently on $h^{-\frac12}$ in the case $\nabla p|_{\partial\Omega}\neq 0$, respectively.
The techniques introduced in this paper would also work for other related projection methods.
\end{remark}

\vspace{.05in}
\begin{remark}
\rm
Another feature of scheme \eqref{scheme-a}-\eqref{scheme-d} is that we have used a modified Crank--Nicolson method for temporal discretization, where the term $\nabla\times \H^{n+\frac12}$ is approximated by $\nabla\times(\frac34\H^{n+1}+\frac14\H^{n-1})$.
This enables us to obtain error estimates for the term $\nabla\times\H$ at certain time steps, instead of an average of those at two consecutive time levels; see \eqref{est-error2}.
Such a modified Crank--Nicolson scheme has been extensively applied to various gradient flow models~\cite{chen14,cheng16a,diegel16,guo16}. An application of this approach to the incompressible MHD system is reported in this work, for the first time.
\end{remark}

\vspace{.05in}
\begin{remark}
\rm
In \eqref{scheme-a}, we have added a stabilization term $\sigma^{-1}(\nabla\cdot\widecheck\H_h^{n+\frac12},\nabla\cdot\w_h)$ to validate the coercivity of the magnetic equation, with which, optimal error estimates for the magnetic field in energy-norm can be proved.
\end{remark}

\vspace{.05in}
\begin{remark} \label{H1u1}
\rm
It is noted that the numerical solutions at two previous time levels are needed for the implementation of \eqref{scheme-a}-\eqref{scheme-d}.
The starting values at time steps $t_0$ and $t_1$ are assumed to be given and satisfy the estimate \eqref{est-error}.
An example of constructing the numerical schemes for starting values is an application of the backward Euler FEM method at $t_1$ and the Stokes and Maxwell projections of the initial data at $t_0$. In such case, the error estimate \eqref{est-error} holds at $t_1$ and $t_0$.

\end{remark}

\vspace{.05in}
In the following subsection,
we analyze the energy stability of scheme \eqref{scheme-a}-\eqref{scheme-d}.
In this paper, we denote by $C$ a generic positive constant and by $\varepsilon$ a generic small positive constant, which are independent of $n$, $h$, $\tau$, and $C_0$.

\subsection{Stability analysis of numerical scheme}

In this subsection, we present the energy stability analysis for the numerical system \eqref{scheme-a}-\eqref{scheme-d}.
Here, we introduce a discrete version of the gradient operator, $\nabla_h: M_h\rightarrow {\bf X}_h$, defined as
\begin{align}
(\v_h,\nabla_h q_h)=-(\nabla\cdot \v_h,q_h), \label{def-nablah}
\quad
\forall \v_h\in{\bf X}_h, q_h\in M_h.
\end{align}
Through the definition of the discrete gradient operator $\nabla_h$, we can rewrite the equation \eqref{scheme-c} in the following equivalent form:
\begin{align}
&\frac{\u_h^{n+1}-\widehat\u_h^{n+1}}{\tau}
+\frac12\nabla_h(p_h^{n+1}-p_h^n)=0 . \label{scheme-c2}
\end{align}
The abstract form \eqref{scheme-c2} will be useful in the stability analysis of numerical scheme.

\begin{theorem}\label{stability}
The numerical solution $(\H_h^n,\u_h^n,p_h^n)$ to the fully discrete linearized FEM \eqref{scheme-a}-\eqref{scheme-d} satisfies the following energy stability estimate
  \begin{equation}\label{est-stability}
  \begin{aligned}
    \frac\mu{2\tau}\Big(\|\H_h^{n+1}\|_{L^2}^2-\|\H_h^n\|_{L^2}^2\Big)
    +\frac\mu{8\tau}\Big(\|\H_h^{n+1}-\H_h^n\|_{L^2}^2
    -\|\H_h^{n}-\H_h^{n-1}\|_{L^2}^2\Big)  \\
    +\frac1{2\tau}\Big(\|\u_h^{n+1}\|_{L^2}^2-\|\u_h^n\|_{L^2}^2\Big)
    +\frac{\tau}8\Big(\|\nabla_h p_h^{n+1}\|_{L^2}^2-\|\nabla_h p_h^n\|_{L^2}^2\Big) \\
    \leq C\Big(\|\J^{n+\frac12}\|_{L^2}^2+ \|\f^{n+\frac12}\|_{L^2}^2 \Big),
  \end{aligned}
\end{equation}
for $n=1,2,\dots,N-1$, where $C$ is a positive constant independent of $\tau$ and $h$.

\end{theorem}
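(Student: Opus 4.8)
The plan is to establish the energy stability estimate \eqref{est-stability} by testing the scheme with the natural energy-inducing test functions and carefully tracking the contributions from the decoupled Stokes solver. First I would take $\w_h=\widecheck\H_h^{n+\frac12}=\frac34\H_h^{n+1}+\frac14\H_h^{n-1}$ in \eqref{scheme-a} and $\v_h=\overline\u_h^{n+\frac12}=\frac12\widehat\u_h^{n+1}+\frac12\u_h^n$ in \eqref{scheme-b}, and then add the two resulting identities. The crucial cancellation is that the two coupling terms $-\mu(\overline\u_h^{n+\frac12}\times\widetilde\H_h^{n+\frac12},\nabla\times\widecheck\H_h^{n+\frac12})$ from \eqref{scheme-a} and $\mu(\widetilde\H_h^{n+\frac12}\times(\nabla\times\widecheck\H_h^{n+\frac12}),\overline\u_h^{n+\frac12})$ from \eqref{scheme-b} cancel exactly, by the scalar triple-product identity $(\a\times\b,\c)=(\b\times\c,\a)$; this is precisely why the semi-implicit treatment uses the extrapolation $\widetilde\H_h^{n+\frac12}$ and the average $\overline\u_h^{n+\frac12}$ consistently in both equations. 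Similarly, the skew-symmetry $b(\widetilde\u_h^{n+\frac12},\overline\u_h^{n+\frac12},\overline\u_h^{n+\frac12})=0$ from \eqref{def-b} kills the convection term, and the diffusion terms produce the dissipation $\sigma^{-1}\|\nabla\times\widecheck\H_h^{n+\frac12}\|_{L^2}^2+\sigma^{-1}\|\nabla\cdot\widecheck\H_h^{n+\frac12}\|_{L^2}^2+\nu\|\nabla\overline\u_h^{n+\frac12}\|_{L^2}^2\ge 0$, which can be discarded.

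Next I would handle the two temporal difference terms. For the magnetic field, $\mu(\frac{\H_h^{n+1}-\H_h^n}{\tau},\frac34\H_h^{n+1}+\frac14\H_h^{n-1})$ must be rearranged into a telescoping form. The modified Crank--Nicolson weights $\frac34,\frac14$ are designed so that this quantity equals, up to the factor $\frac{1}{\tau}$, the discrete energy increment $\frac\mu2(\|\H_h^{n+1}\|_{L^2}^2-\|\H_h^n\|_{L^2}^2)+\frac\mu8(\|\H_h^{n+1}-\H_h^n\|_{L^2}^2-\|\H_h^n-\H_h^{n-1}\|_{L^2}^2)$ plus a nonnegative remainder; I would verify this by the elementary identity $(a-b)(\tfrac34 a+\tfrac14 b)=\tfrac12(a^2-b^2)+\tfrac14(a-b)^2$ applied to $\H_h^{n+1},\H_h^n$ together with the extra $\H_h^{n-1}$ contribution, which produces the second-difference telescoping term seen on the left of \eqref{est-stability}. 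This is the step where the stronger stability advertised in the introduction enters.

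The genuinely delicate part, and what I expect to be the main obstacle, is the treatment of the intermediate velocity and the pressure-increment term coming from the decoupled projection step. The test function $\v_h=\overline\u_h^{n+\frac12}$ acts on $\widehat\u_h^{n+1}$, not on $\u_h^{n+1}$, so I must relate $\widehat\u_h^{n+1}$ back to $\u_h^{n+1}$ using the projection equation \eqref{scheme-c2}, namely $\widehat\u_h^{n+1}=\u_h^{n+1}+\frac\tau2\nabla_h(p_h^{n+1}-p_h^n)$. Substituting this relation and using $(\nabla\cdot\u_h^{n+1},q_h)=0$ from \eqref{scheme-d} to control the pressure coupling, one obtains, after squaring the relation \eqref{scheme-c2} and taking the $L^2$ inner product appropriately, the standard projection-method identity that converts the time-difference of $\widehat\u_h^{n+1}$ into the increment $\frac{1}{2\tau}(\|\u_h^{n+1}\|_{L^2}^2-\|\u_h^n\|_{L^2}^2)$ together with the telescoping pressure term $\frac\tau8(\|\nabla_h p_h^{n+1}\|_{L^2}^2-\|\nabla_h p_h^n\|_{L^2}^2)$. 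The key algebraic device is to test \eqref{scheme-c2} with $\frac\tau2\nabla_h(p_h^{n+1}-p_h^n)$ or equivalently to use $\|\widehat\u_h^{n+1}\|^2=\|\u_h^{n+1}\|^2+\frac{\tau^2}{4}\|\nabla_h(p_h^{n+1}-p_h^n)\|^2$ (the cross term vanishes by \eqref{scheme-d} and \eqref{def-nablah}), and then further expand $\|\nabla_h(p_h^{n+1}-p_h^n)\|^2$ to extract the telescoping difference of $\|\nabla_h p_h^{n\pm1}\|^2$; care must be taken that the explicit pressure term $-(p_h^n,\nabla\cdot\v_h)$ in \eqref{scheme-b} combines correctly with this projection identity rather than leaving an uncontrolled residual. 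Finally, the right-hand side source terms $\sigma^{-1}(\nabla\times\J^{n+\frac12},\widecheck\H_h^{n+\frac12})$ and $(\f^{n+\frac12},\overline\u_h^{n+\frac12})$ are bounded by Cauchy--Schwarz and Young's inequality, absorbing the gradient norms into the dissipation already discarded, which yields the stated bound $C(\|\J^{n+\frac12}\|_{L^2}^2+\|\f^{n+\frac12}\|_{L^2}^2)$ and completes the estimate.
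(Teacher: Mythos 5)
Your proposal is correct and follows essentially the same route as the paper's proof: the same test functions $\w_h=\widecheck\H_h^{n+\frac12}$, $\v_h=\overline\u_h^{n+\frac12}$, the same cancellation of the coupling terms and the convection term, the same modified Crank--Nicolson energy identity, and the same projection-step algebra producing the telescoping pressure term. The only cosmetic difference is that you expand $\widehat\u_h^{n+1}=\u_h^{n+1}+\frac\tau2\nabla_h(p_h^{n+1}-p_h^n)$ directly, whereas the paper obtains the identical identities by testing \eqref{scheme-c} separately with $\l_h=\u_h^{n+1}$ and $\l_h=\nabla_h p_h^n$ and invoking \eqref{scheme-c2}.
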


\begin{proof}
By taking $\w_h=\widecheck\H_h^{n+\frac12}$ in \eqref{scheme-a} and
 $\v_h=\overline\u_h^{n+\frac12}$ in \eqref{scheme-b}, we get
\begin{align}
&\frac\mu{2\tau}\Big(\|\H_h^{n+1}\|_{L^2}^2-\|\H_h^n\|_{L^2}^2\Big)+\frac\mu{8\tau}\Big(\|\H_h^{n+1}-\H_h^n\|_{L^2}^2-
\|\H_h^n-\H_h^{n-1}\|_{L^2}^2\Big)   \nonumber\\
&\quad
+\frac\mu{8\tau}\|\H_h^{n+1}-2\H_h^n+\H_h^{n-1}\|_{L^2}^2
+\sigma^{-1}\|\nabla\times\widecheck\H_h^{n+\frac12}\|_{L^2}^2
+\sigma^{-1}\|\nabla\cdot\widecheck\H_h^{n+\frac12}\|_{L^2}^2 \nonumber\\
&\quad
-\mu\Big(\overline\u_h^{n+\frac12}\times\widetilde\H_h^{n+\frac12},\nabla\times\widecheck\H_h^{n+\frac12}\Big)   \nonumber\\
&=\sigma^{-1}\Big(\nabla\times \J^{n+\frac12},\widecheck\H_h^{n+\frac12} \Big)  ,
  \label{sta-H}
\end{align}
and
\begin{align}
&
\frac1{2\tau} \Big(\|\widehat\u_h^{n+1}\|_{L^2}^2-\|\u_h^n\|_{L^2}^2\Big)
+\nu\|\nabla\overline\u_h^{n+\frac12}\|_{L^2}^2
- \Big(p_h^n,\nabla\cdot\overline\u_h^{n+\frac12}\Big) \nonumber\\
&\quad
+\mu\Big(\widetilde\H_h^{n+\frac12}\times(\nabla\times\widecheck\H_h^{n+\frac12}),\overline\u_h^{n+\frac12} \Big)  \nonumber\\
     &=\Big(\f^{n+\frac12},\overline\u_h^{n+\frac12}\Big),
\end{align}
respectively,
where we have used the fact that $b(\widetilde\u_h^{n+\frac12},\overline\u_h^{n+\frac12},\overline\u_h^{n+\frac12})=0$, and
\begin{align*}
&\bigg(\frac{\H_h^{n+1}-\H_h^n}{\tau},\w_h \bigg) \\
&=
\frac1{2\tau}\Big(\|\H_h^{n+1}\|_{L^2}^2-\|\H_h^n\|_{L^2}^2\Big)+
\frac1{8\tau}\Big(\|\H_h^{n+1}-\H_h^n\|_{L^2}^2-\|\H_h^n-\H_h^{n-1}\|_{L^2}^2\Big)  \\
&\quad
+\frac1{8\tau}\|\H_h^{n+1}-2\H_h^n+\H_h^{n-1}\|_{L^2}^2 .
\end{align*}

In turn, a substitution of $\l_h=\u_h^{n+1}$ in \eqref{scheme-c} yields
\begin{align}
\frac{1}{2\tau}\Big(\|\u_h^{n+1}\|_{L^2}^2- \|\widehat\u_h^{n+1}\|_{L^2}^2
+\|\u_h^{n+1}-\widehat\u_h^{n+1}\|_{L^2}^2 \Big)=0,
\end{align}
where we have used the divergence-free condition $\eqref{scheme-d}$ for $q_h$ being $p_h^{n+1},p_h^n$.

Next, we choose $\l_h=\nabla_h p_h^n$ in \eqref{scheme-c} and obtain
\begin{align}
-\Big(\nabla\cdot\widehat\u_h^{n+1},p_h^n\Big)
=\frac{\tau}{4}\Big(\|\nabla_hp_h^{n+1}\|_{L^2}^2-\|\nabla_hp_h^n\|_{L^2}^2-\|\nabla_h(p_h^{n+1}-p_h^n)\|_{L^2}^2 \Big).
\end{align}
Furthermore, we get the following result from \eqref{scheme-c2}
\begin{align}
\frac{1}{4}\|\nabla_h(p_h^{n+1}-p_h^n)\|_{L^2}^2
=\frac{1}{\tau^2}\|\u_h^{n+1}-\widehat\u_h^{n+1}\|_{L^2}^2 . \label{sta-nablap}
\end{align}

Summing up \eqref{sta-H}-\eqref{sta-nablap} leads to
\begin{equation}
  \begin{aligned}
    &\frac\mu{2\tau} \Big(\|\H_h^{n+1}\|_{L^2}^2-\|\H_h^n\|_{L^2}^2\Big)
    +\frac\mu{8\tau}\Big(\|\H_h^{n+1}-\H_h^n\|_{L^2}^2-\|\H_h^n-\H_h^{n-1}\|_{L^2}^2\Big) \\
     &\quad
     +\frac\mu{8\tau}\|\H_h^{n+1}-2\H_h^n+\H_h^{n-1}\|_{L^2}^2
    +\sigma^{-1}\|\nabla\times\widecheck\H_h^{n+\frac12}\|_{L^2}^2
    +\sigma^{-1}\|\nabla\cdot\widecheck\H_h^{n+\frac12}\|_{L^2}^2  \\
    &\quad
    +\frac1{2\tau} \Big(\|\u_h^{n+1}\|_{L^2}^2-\|\u_h^n\|_{L^2}^2\Big)
    +\nu\|\nabla\overline\u_h^{n+\frac12}\|_{L^2}^2
    +\frac{\tau}{8}\Big(\|\nabla_hp_h^{n+1}\|_{L^2}^2-\|\nabla_hp_h^n\|_{L^2}^2 \Big) \\
    &\le
    \sigma^{-1}\Big(\nabla\times \J^{n+\frac12},\widecheck\H_h^{n+\frac12}\Big)
    +\Big(\f^{n+\frac12},\overline\u_h^{n+\frac12}\Big).
    \label{takeinnerpro}
  \end{aligned}
\end{equation}
For the right-hand side of \eqref{takeinnerpro}, we can easily see that
\begin{equation*}
  \begin{aligned}
    \sigma^{-1}\Big(\nabla\times \J^{n+\frac12},\widecheck\H_h^{n+\frac12}\Big)
    &=\sigma^{-1}\Big(\J^{n+\frac12},\nabla\times\widecheck\H_h^{n+\frac12}\Big) \\
     &\leq\frac{1}{4\sigma}\|\J^{n+\frac12}\|_{L^2}^2+\sigma^{-1}\|\nabla\times\widecheck\H_h^{n+\frac12}\|_{L^2}^2,
  \end{aligned}
\end{equation*}
and
\begin{equation*}
  \begin{aligned}
    \Big(\f^{n+\frac12},\overline\u_h^{n+\frac12}\Big)
    &\leq\frac1{4\varepsilon}\|\f^{n+\frac12}\|_{L^2}^2+\varepsilon\|\overline\u_h^{n+\frac12}\|_{L^2}^2
     \leq\frac1{4\varepsilon}\|\f^{n+\frac12}\|_{L^2}^2+
    \varepsilon\|\nabla\overline\u_h^{n+\frac12}\|_{L^2}^2,
  \end{aligned}
\end{equation*}
where $\varepsilon$ is an arbitrarily small constant. Substituting the above estimates into \eqref{takeinnerpro}, we get the desired result \eqref{est-stability} immediately. This completes the proof of Theorem \ref{stability}.
\end{proof}

\section{Error estimates}\label{sec-error}

We present the proof of the existence and uniqueness of numerical solution and the error estimates \eqref{est-error}-\eqref{est-error2} in Section \ref{sec-error}.

\subsection{Preliminary results}
We introduce several types of projections. Let $P_h: L^2(\Omega)\rightarrow M_h$ denote the $L^2$ projection which satisfies
\begin{align}
(v-P_hv, q_h)=0, \quad v\in L^2(\Omega), \, \, \, \forall q_h\in M_h.
\label{def-Ph}
\end{align}
For the sake of brevity, if $v$ is a vector function in ${\bf L}^2(\Omega)$, we use ${\bm P}_hv$ to denote the $L^2$ projection of the vector function $v$ onto ${\bf X}_h$.
Furthermore, let $(\R_h\u,R_hp)$ denote the Stokes projection of $(\u,p)\in {\bf{H}}^1_0(\Omega)\times L^2(\Omega)/\mathbb{R}$ satisfying
\begin{align}
&\nu(\nabla( \u- \R_h\u), \nabla\v_h)-(p-R_hp,\nabla\cdot\v_h)=0, &&\forall\, \v_h\in{\bf{X}}_h, \label{def-Rh1}\\
&(\nabla\cdot(\u-\R_h\u),q_h) = 0, &&\forall\, q_h\in M_h. \label{def-Rh2}
\end{align}
We also introduce the Maxwell projection operator $\Pi_h:$ $\ring{\bf H}^1(\Omega)\rightarrow {\bf S}_h$, by
\begin{align}
(\nabla\times(\H-\Pi_h\H), \nabla\times{\bm w}_h)+(\nabla\cdot(\H-\Pi_h\H), \nabla\cdot{\bm w}_h) =0,
\quad \H\in\ring{\bf H}^1(\Omega) , \forall {\bm w}_h \in {\bf S}_h .
\end{align}

For the above projections, the following estimates are recalled~\cite{GR1987,Thomee2006}.

\begin{lemma}
The following estimates are valid for the $L^2$ projection, Stokes projection, and Maxwell projection:
\begin{align}
&
\|P_hv\|_{L^{s}} \le C\|v\|_{L^{s}}, \quad\forall v\in L^2(\Omega), \label{Ih-est1} \\
&\|P_hv\|_{H^1} \le C\|v\|_{H^1}, \quad\forall v\in H_0^1(\Omega) , \label{Ih-est3} \\
&
\|v-P_hv\|_{L^2} \le Ch^{\ell+1}\|v\|_{H^{\ell+1}}, \label{Ih-est2}
\end{align}
for $m=0,1$, $0\le\ell\le r$, $1\le s\le \infty$,
and
\begin{align}
&\|\R_h\u\|_{W^{1,s}}+\|R_hp\|_{L^s} \le C (\|\u\|_{W^{1,s}}+\|p\|_{L^s}) , &&  \label{ph-infty} \\
&\|\u-\R_h\u\|_{L^s}+h\|\u-\R_h\u\|_{W^{1,s}} \le Ch^{\ell+1}(\|\u\|_{W^{\ell+1,s}}+\|p\|_{W^{\ell,s}}), \label{Ph-est1} \\
&\|p-R_hp\|_{L^s} \le Ch^{\ell}(\|\u\|_{W^{\ell+1,s}}+\|p\|_{W^{\ell,s}}), \label{Ph-est} \\
&
\|\partial_t(\u-\R_h\u)\|_{L^s} + h\|\partial_t(p-R_hp)\|_{L^s}
\le
Ch^{\ell+1}(\|\partial_t\u\|_{W^{\ell+1,s}}+\|\partial_t p\|_{W^{\ell,s}}),  \label{Ph-est-2}
\end{align}
for $0\le\ell\le r$, $1<s<\infty$,
and
\begin{align}
&
\|\H-\Pi_h\H\|_{L^2}+h\|\H-\Pi_h\H\|_{H^1} \le Ch^{\ell+1}\|\H\|_{H^{\ell+1}} ,
\label{Pih-est}
\end{align}
for $0\le\ell\le r$, where $C$ is a positive constant independent of  $h$.
\end{lemma}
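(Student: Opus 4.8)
The three groups of bounds are classical finite element results, so the plan is to assemble them from the standard mixed-method and approximation-theory toolkit rather than to derive everything from scratch.

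For the $L^2$ projection, I would start from the orthogonality \eqref{def-Ph}. The $L^2$-stability is immediate because $P_h$ is an orthogonal projection, and the approximation bound \eqref{Ih-est2} follows from the best-approximation property $\|v-P_hv\|_{L^2}\le\|v-I_hv\|_{L^2}$ combined with the standard interpolation error estimate for a quasi-interpolant $I_hv$. The $L^s$-stability \eqref{Ih-est1} and the $H^1$-stability \eqref{Ih-est3} are the only nontrivial points: they rely essentially on the quasi-uniformity of $\Im_h$, and I would obtain them from a local inverse inequality together with the $L^2$-stability, following the classical argument of Crouzeix--Thom\'ee and Douglas--Dupont--Wahlbin for stability of the $L^2$ projection on quasi-uniform meshes.

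For the Stokes projection $(\R_h\u,R_hp)$ defined by \eqref{def-Rh1}-\eqref{def-Rh2}, I would read the pair as the mixed finite element solution of a Stokes problem whose data are the given $(\u,p)$. Well-posedness and the $W^{1,2}\times L^2$ energy estimate come from the discrete inf-sup (LBB) condition, which the Taylor--Hood pair ${\bf X}_h\times M_h$ satisfies for $r\ge2$ on a quasi-uniform mesh; the $W^{1,2}$-part of \eqref{Ph-est1} and the pressure bound \eqref{Ph-est} then follow from C\'ea-type quasi-optimality plus the approximation properties of ${\bf X}_h$ and $M_h$, while the $L^s$-part of \eqref{Ph-est1} is obtained by an Aubin--Nitsche duality argument exploiting the $H^2\times H^1$ elliptic regularity of the Stokes problem on the convex polyhedron $\Omega$. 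The $W^{1,s}/L^s$-stability \eqref{ph-infty} for general $s$ is the most delicate ingredient; here I would invoke the known $L^s$-stability of the discrete Stokes projection on quasi-uniform families recorded in Girault--Raviart. The time-derivative estimate \eqref{Ph-est-2} then requires no new work: since $\R_h$ and $R_h$ are time-independent linear operators, they commute with $\partial_t$, so $\partial_t(\u-\R_h\u)=\partial_t\u-\R_h(\partial_t\u)$ and $\partial_t(p-R_hp)=\partial_t p-R_h(\partial_t p)$, and I would re-apply \eqref{Ph-est1}-\eqref{Ph-est} with $(\u,p)$ replaced by $(\partial_t\u,\partial_t p)$.

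For the Maxwell projection $\Pi_h$, the key structural fact is that on a convex $\Omega$ the form $a(\v,\w):=(\nabla\times\v,\nabla\times\w)+(\nabla\cdot\v,\nabla\cdot\w)$ is coercive on $\ring{\bf H}^1(\Omega)$, by the Friedrichs--Gaffney inequality $\|\nabla\w\|_{L^2}^2\le C(\|\nabla\times\w\|_{L^2}^2+\|\nabla\cdot\w\|_{L^2}^2)$ valid when $\w\times\n|_{\partial\Omega}=0$. This makes $\Pi_h$ well defined by Lax--Milgram and gives C\'ea quasi-optimality in the $H^1$-norm, hence the $H^1$-part of \eqref{Pih-est} after inserting the interpolant of $\H$; the $L^2$-part follows from an Aubin--Nitsche duality argument using the $H^2$-regularity of the associated div--curl problem on the convex domain. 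The main obstacle throughout is the family of $s\neq2$ (in particular $s=\infty$) stability and approximation bounds \eqref{Ih-est1}, \eqref{Ih-est3}, and \eqref{ph-infty}: these are exactly the places where quasi-uniformity is unavoidable and where the elementary Hilbert-space duality that settles the $L^2$/$H^1$ cases no longer applies, which is why they are most naturally imported from the cited maximal-norm stability theory rather than re-derived.
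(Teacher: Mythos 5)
Your proposal is correct, but there is nothing in the paper to compare it against line by line: the paper does not prove this lemma at all. It is stated as a collection of recalled classical results, with the proof outsourced entirely to the cited references (Girault--Raviart for the Stokes estimates, Thom\'ee for the $L^2$-projection estimates), exactly as one would expect for standard projection theory. Your outline is a faithful reconstruction of how those references establish the bounds: orthogonality plus best approximation and the Crouzeix--Thom\'ee/Douglas--Dupont--Wahlbin quasi-uniformity argument for the $L^s$ and $H^1$ stability of $P_h$; inf-sup stability of Taylor--Hood, C\'ea quasi-optimality, and Aubin--Nitsche duality for the Stokes pair, with the $W^{1,s}$ stability for general $s$ correctly flagged as the one genuinely deep ingredient to be imported rather than rederived; and the observation that the time-independent projection commutes with $\partial_t$, which reduces \eqref{Ph-est-2} to \eqref{Ph-est1}--\eqref{Ph-est} applied to $(\partial_t\u,\partial_t p)$ --- this is precisely why the paper needs no separate statement for it. Two minor points worth tightening if you were to write this out: coercivity of the curl--curl plus div--div form on $\ring{\bf H}^1(\Omega)$ requires not only the Friedrichs--Gaffney gradient bound but also a Poincar\'e-type control of $\|\w\|_{L^2}$, which uses that the convex domain is simply connected (so no nontrivial harmonic fields with vanishing tangential trace exist); and the Aubin--Nitsche step for the Maxwell projection leans on $H^2$ regularity of the dual div--curl problem on a convex polyhedron, which is itself a nontrivial citation rather than a formality.
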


Next, we recall two lemmas that will be frequently used in this paper.
\begin{lemma}[\cite{BS2002}]
Given $v_h$ in the finite element spaces ${\bf X}_h$, $M_h$, or ${\bf S}_h$, the following inverse inequality holds
\begin{align}
\|v_h\|_{W^{m,s}} \le Ch^{n-m+\frac{d}{s}-\frac{d}{q}}\|v_h\|_{W^{n,q}},
\label{inverse-1}
\end{align}
for $0\le n\le m\le 1$, $1\le q\le s\le \infty$, where $d$ denotes the dimension of the space and $C$ is a positive constant independent of  $h$.
\end{lemma}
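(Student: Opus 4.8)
The plan is to establish \eqref{inverse-1} by the standard scaling (``reference element'') argument, proving it first on a single element of $\Im_h$ and then assembling over the mesh. Since $\Omega$ is polyhedral (polygonal) and $\Im_h$ consists of simplices, each $K_j$ is the image of a fixed reference simplex $\hat K$ under an affine map $F_j(\hat x)=B_j\hat x+b_j$, and on $K_j$ every $v_h$ in ${\bf X}_h$, $M_h$, or ${\bf S}_h$ is a (componentwise) polynomial of degree at most $r$; thus all element-wise Sobolev norms are finite and the argument applies componentwise to the vector-valued spaces. First I would record the scaling estimates for the Sobolev seminorms: writing $\hat v=v\circ F_j$, one has $|\hat v|_{W^{j,p}(\hat K)}\le C\|B_j\|^{j}|\det B_j|^{-1/p}|v|_{W^{j,p}(K_j)}$ together with the reciprocal bound expressed through $B_j^{-1}$, for $0\le j\le 1$ and $1\le p\le\infty$. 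The quasi-uniformity of $\Im_h$ then yields $\|B_j\|\le Ch$, $\|B_j^{-1}\|\le Ch^{-1}$, and $c\,h^{d}\le|\det B_j|\le C h^{d}$ uniformly in $j$.

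The crucial step is a purely finite-dimensional estimate on $\hat K$. Because $n\le m$, the seminorm $|\cdot|_{W^{m,s}(\hat K)}$ vanishes on the space of polynomials of degree at most $n-1$, on which $|\cdot|_{W^{n,q}(\hat K)}$ is a norm; hence $|\cdot|_{W^{n,q}(\hat K)}$ induces a genuine norm on the finite-dimensional quotient through which $|\cdot|_{W^{m,s}(\hat K)}$ factors. Equivalence of norms on a finite-dimensional space then supplies a constant $C=C(\hat K,r,m,n,s,q,d)$, independent of $v_h$ and $h$, with $|\hat v|_{W^{m,s}(\hat K)}\le C\,|\hat v|_{W^{n,q}(\hat K)}$ for all such polynomials $\hat v$. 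Chaining this with the two scaling estimates at the matching differentiation orders produces the \emph{sharp} element-wise bound $|v_h|_{W^{m,s}(K_j)}\le C h^{\,n-m+d/s-d/q}\,|v_h|_{W^{n,q}(K_j)}$; collecting the analogous bounds over the orders $0\le i\le m$ (each carrying a power of $h$ no more singular than $n-m+d/s-d/q$, since $h<1$ and $i\le m$) upgrades this to the full-norm statement $\|v_h\|_{W^{m,s}(K_j)}\le C h^{\,n-m+d/s-d/q}\,\|v_h\|_{W^{n,q}(K_j)}$ on each element.

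Finally I would assemble over $\Im_h$. For $s<\infty$ one raises the element-wise inequality to the $s$-th power and sums; the hypothesis $q\le s$ is exactly what is needed, since the discrete embedding $\ell^{q}\hookrightarrow\ell^{s}$ gives $\big(\sum_j a_j^{s}\big)^{1/s}\le\big(\sum_j a_j^{q}\big)^{1/q}$ with $a_j=\|v_h\|_{W^{n,q}(K_j)}$, so that no additional power of the number of elements $\sim h^{-d}$ is incurred and the exponent $n-m+d/s-d/q$ is preserved; the case $s=\infty$ follows by taking the maximum over $j$ and using $\max_j a_j\le\big(\sum_j a_j^{q}\big)^{1/q}$. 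Because $m\le 1$ and the spaces are $H^1$-conforming, the broken global norm coincides with $\|v_h\|_{W^{m,s}(\Omega)}$, which completes the argument.

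I expect the main obstacle to be pinning down the \emph{sharp} exponent, that is the term $+n$ rather than a cruder $-m+d/s-d/q$: this forces the seminorm-to-seminorm reduction on $\hat K$ (instead of bounding a seminorm by a full norm) and therefore relies essentially on the structural hypothesis $n\le m$. A secondary point demanding care is that the clean global exponent survives the element-to-mesh passage only because $q\le s$; were $q>s$, Hölder's inequality would introduce an extra factor $h^{-d(1/s-1/q)}$.
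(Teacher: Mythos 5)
Your proof is correct, but note that the paper itself offers no proof of this lemma: it is quoted verbatim from the cited reference \cite{BS2002}, and your argument is precisely the standard one given there --- affine scaling to a reference simplex with quasi-uniformity bounds $\|B_j\|\le Ch$, $\|B_j^{-1}\|\le Ch^{-1}$, $|\det B_j|\sim h^d$, finite-dimensional norm equivalence applied on the quotient of $P_r(\hat K)$ by $P_{n-1}(\hat K)$ to get the sharp seminorm-to-seminorm bound, and the embedding $\ell^q\hookrightarrow\ell^s$ (valid exactly because $q\le s$) to assemble over the mesh without losing a power of the number of elements. Your closing observations, that the hypothesis $n\le m$ is what makes the quotient argument (and hence the $+n$ in the exponent) work, and that $q>s$ would cost an extra factor $h^{-d(1/s-1/q)}$, correctly identify why the stated exponent is sharp in the regime $1\le q\le s\le\infty$.
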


\begin{lemma}\label{lem-nablah}
The discrete gradient operator $\nabla_h: M_h\rightarrow {\bf X}_h$ (defined in \eqref{def-nablah}) satisfies the following estimates
\begin{align}
&
\|\nabla_hq_h\|_{L^2} \le Ch^{-1}\|q_h\|_{L^2},
\label{inverse-2} \\
&
\|\nabla_hq_h\|_{L^3} \le Ch^{-1}\|q_h\|_{L^3},
\label{inverse-3}
\end{align}
for any $q_h\in M_h$, where $C$ is a positive constant independent of  $h$.
\end{lemma}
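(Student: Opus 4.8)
The plan is to read everything off the defining relation \eqref{def-nablah}, $(\v_h,\nabla_h q_h)=-(\nabla\cdot\v_h,q_h)$, and to reduce each of the two bounds to the inverse inequality \eqref{inverse-1} together with the $L^s$-stability of the $L^2$ projection. For the $L^2$ estimate \eqref{inverse-2} I would test \eqref{def-nablah} with the admissible choice $\v_h=\nabla_h q_h\in{\bf X}_h$, which gives
\[
\|\nabla_h q_h\|_{L^2}^2=-(\nabla\cdot\nabla_h q_h,\,q_h)\le\|\nabla\cdot\nabla_h q_h\|_{L^2}\,\|q_h\|_{L^2}.
\]
Since trivially $\|\nabla\cdot\nabla_h q_h\|_{L^2}\le C\|\nabla_h q_h\|_{H^1}$, and the inverse inequality \eqref{inverse-1} with $m=1$, $n=0$, $s=q=2$ yields $\|\nabla_h q_h\|_{H^1}\le Ch^{-1}\|\nabla_h q_h\|_{L^2}$, I obtain $\|\nabla_h q_h\|_{L^2}^2\le Ch^{-1}\|\nabla_h q_h\|_{L^2}\|q_h\|_{L^2}$, and dividing by $\|\nabla_h q_h\|_{L^2}$ gives \eqref{inverse-2}.

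For the $L^3$ estimate \eqref{inverse-3} the direct energy test no longer lands in the correct norm, so I would instead argue by duality $L^3(\Omega)=(L^{3/2}(\Omega))^*$, writing
\[
\|\nabla_h q_h\|_{L^3}=\sup_{{\bm 0}\neq\w\in{\bf L}^{3/2}(\Omega)}\frac{(\nabla_h q_h,\w)}{\|\w\|_{L^{3/2}}}.
\]
Because $\nabla_h q_h\in{\bf X}_h$ and $\w-{\bm P}_h\w$ is $L^2$-orthogonal to ${\bf X}_h$, I have $(\nabla_h q_h,\w)=(\nabla_h q_h,{\bm P}_h\w)$; then \eqref{def-nablah} applied to $\v_h={\bm P}_h\w\in{\bf X}_h$ converts this to $-(\nabla\cdot{\bm P}_h\w,q_h)$. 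A H\"older bound followed by \eqref{inverse-1} (this time with $s=q=3/2$) and the $L^{3/2}$-stability of the vector $L^2$ projection gives
\[
|(\nabla_h q_h,\w)|\le\|\nabla\cdot{\bm P}_h\w\|_{L^{3/2}}\|q_h\|_{L^3}\le C\|{\bm P}_h\w\|_{W^{1,3/2}}\|q_h\|_{L^3}\le Ch^{-1}\|\w\|_{L^{3/2}}\|q_h\|_{L^3}.
\]
Dividing by $\|\w\|_{L^{3/2}}$ and taking the supremum then produces \eqref{inverse-3}.

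The main obstacle is the $L^3$ case, and specifically its one genuinely nontrivial ingredient: the $L^{3/2}$-stability of the vector-valued $L^2$ projection ${\bm P}_h$ onto ${\bf X}_h$, i.e.\ the vector analogue of \eqref{Ih-est1}. This is precisely the place where the quasi-uniformity of the partition $\Im_h$ enters, and it is the reason the duality route is needed rather than a direct test as in the $L^2$ case. Once that stability estimate is granted, the remaining steps—H\"older's inequality, the orthogonality of $\w-{\bm P}_h\w$ to ${\bf X}_h$, the definition \eqref{def-nablah}, and the inverse inequality \eqref{inverse-1}—are entirely routine.
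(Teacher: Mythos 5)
Your proposal is correct and follows essentially the same route as the paper: the $L^2$ bound by testing \eqref{def-nablah} with $\v_h=\nabla_h q_h$ and invoking the inverse inequality, and the $L^3$ bound by $L^3$--$L^{3/2}$ duality, replacing $\w$ by ${\bm P}_h\w$ via $L^2$-orthogonality, applying \eqref{def-nablah}, H\"older, the inverse inequality, and the $L^{3/2}$-stability of the $L^2$ projection (the same ingredient the paper cites from Thom\'ee). No gaps; the two arguments coincide step for step.
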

\begin{proof}
The estimate \eqref{inverse-2} follows immediately by substituting $\v_h=\nabla_hq_h$ into \eqref{def-nablah} and inverse inequality \eqref{inverse-1}.

It remains to prove  \eqref{inverse-3}. Given $q_h\in M_h$, it is easy to see that
\begin{align*}
(\nabla_hq_h,\v)
=(\nabla_hq_h,P_h\v)
=-(q_h,\nabla\cdot P_h\v)
&\le
\|q_h\|_{L^3}\|\nabla\cdot P_h\v\|_{L^\frac{3}{2}} \\
&\le
C\|q_h\|_{L^3}h^{-1}\|P_h\v\|_{L^\frac{3}{2}}
 \le
Ch^{-1}\|q_h\|_{L^3}\|\v\|_{L^\frac{3}{2}},
\end{align*}
for all $\v\in L^{\frac32}(\Omega)$.
Here, $P_h$ is the $L^2$ projection, which has a bounded extension to $L^p(\Omega)$ for $1\le p\le \infty$, with a bound independent of $h$; see \cite[Lemma 6.1]{Thomee2006}.
Then, using the duality between $L^3(\Omega)$ and $L^{\frac32}(\Omega)$, it is straightforward to derive \eqref{inverse-3}. The proof of Lemma~\ref{lem-nablah} is completed.
\end{proof}

\subsection{Error equations}
To establish error estimates for the scheme \eqref{scheme-a}-\eqref{scheme-d}, we introduce an intermediate
function $\widehat{\R_h\u^{n+1}}\in {\bm X}_h$, defined as
\begin{align}
&\bigg(\frac{\R_h\u^{n+1}-\widehat{\R_h\u^{n+1}}}{\tau},\l_h\bigg)-\frac12\Big(R_hp^{n+1}-R_hp^n,\nabla\cdot\l_h\Big)=0 ,  \quad \forall  \l_h\in{\bm X}_h , \label{Rh-hat}
\\
 &  \mbox{or equivalently,} \quad
\frac{\R_h\u^{n+1}-\widehat{\R_h\u^{n+1}}}{\tau}
=-\frac12\nabla_h\big(R_hp^{n+1}-R_hp^n\big) . \label{Rh-hat2}
\end{align}

With the intermediate function defined above and the projections introduced in the previous subsection, the MHD system \eqref{PDEa}-\eqref{PDEc} can be rewritten as follows:
\begin{align}
&
\mu\bigg(\frac{\Pi_h\H^{n+1}-\Pi_h\H^n}{\tau},\w_h\bigg)
+\sigma^{-1}\Big(\nabla\times\Pi_h\widecheck\H^{n+\frac12},\nabla\times\w_h \Big)
+\sigma^{-1}\Big(\nabla\cdot\Pi_h\widecheck\H^{n+\frac12},\nabla\cdot\w_h \Big)
\nonumber\\
&\quad\quad
-\mu\Big(\overline\u^{n+\frac12}\times\widetilde\H^{n+\frac12} ,\nabla\times\w_h\Big)
=\sigma^{-1}\Big(\nabla\times \J^{n+\frac12},\w_h\Big) +R_{\H}^{n+1}(\w_h),
\label{scheme-a-2} \\
&
\bigg(\frac{\widehat{\R_h\u^{n+1}}-\R_h\u^n}{\tau},\v_h\bigg)
+\nu\bigg(\nabla\Big(\frac{\widehat{\R_h\u^{n+1}} +\R_h\u^n}{2}\Big),\nabla\v_h\bigg)
+b\Big(\widetilde\u^{n+\frac12}, \overline\u^{n+\frac12},\v_h \Big)
\nonumber\\
&\quad\quad
-\Big(R_hp^n,\nabla\cdot\v_h\Big)
+\mu\Big(\widetilde\H^{n+\frac12}\times(\nabla\times\widecheck\H^{n+\frac12}),\v_h\Big)
\nonumber\\
&\quad=
\Big(\f^{n+\frac12},\v_h\Big)
+\bigg(\frac{\widehat{\R_h\u^{n+1}}-\R_h\u^{n+1}}{\tau},\v_h\bigg)
+\nu\bigg(\nabla\Big(\frac{\widehat{\R_h\u^{n+1}} - \R_h\u^{n+1}}{2}\Big),\nabla\v_h\bigg)
\nonumber\\
&\quad\quad
-\bigg(R_hp^n-\frac{R_hp^{n+1}+R_hp^n}{2},\nabla\cdot\v_h\bigg)
+R_{\u}^{n+1}(\v_h),
\label{scheme-b-2} \\
&
\Big(\nabla\cdot\u^{n+1},q_h\Big)=0,   \label{scheme-d-2}
\end{align}
for any $(\w_h,\v_h,q_h)\in({\bm S}_h,{\bm X}_h,M_h)$ and $n=1,2,\dots,N-1$, where we denote $\widehat \u^{n+1}:=\u^{n+1}$, $R_\H^{n+1}(\w_h)$ and $R_\u^{n+1}(\v_h)$ stand for the truncation errors satisfying
\begin{align}
&
R_{\H}^{n+1}(\w_h) \nonumber\\
&=\mu\bigg( \frac{\Pi_h\H^{n+1}-\Pi_h\H^n}{\tau}-\partial_t\H^{n+\frac12},\w_h\bigg)
+\sigma^{-1}\Big(\nabla\times(\Pi_h\widecheck\H^{n+\frac12}- \H^{n+\frac12}),\nabla\times\w_h \Big) \nonumber\\
&\quad
+\sigma^{-1}\Big(\nabla\cdot( \Pi_h\widecheck\H^{n+\frac12}- \H^{n+\frac12}),\nabla\cdot\w_h \Big)
-\mu\Big(\overline\u^{n+\frac12}\times\widetilde\H^{n+\frac12}-\u^{n+\frac12}\times\H^{n+\frac12} ,\nabla\times\w_h\Big) ,\\
&
R_{\u}^{n+1}(\v_h) \nonumber\\
&=\bigg(\frac{\R_h\u^{n+1}-\R_h\u^n}{\tau}-\partial_t\u^{n+\frac12},\v_h\bigg)
+\nu\bigg(\nabla\Big(\frac{\R_h\u^{n+1}+\R_h\u^n}{2}-\u^{n+\frac12}\Big),\nabla\v_h\bigg)
\nonumber\\
&\quad
+\bigg(b\Big(\widetilde\u^{n+\frac12}, \overline\u^{n+\frac12},\v_h \Big)-b\Big(\u^{n+\frac12}, \u^{n+\frac12},\v_h \Big) \bigg)
-\bigg(\frac{R_hp^{n+1}+R_hp^n}{2} -p^{n+\frac12},\nabla\cdot\v_h\bigg) \nonumber\\
&\quad
+\mu\Big(\widetilde\H^{n+\frac12}\times(\nabla\times\widecheck\H^{n+\frac12}) - \H^{n+\frac12}\times(\nabla\times\H^{n+\frac12}),\v_h\Big).
\end{align}

Utilizing the projection error estimates presented in the previous subsection, we only need to estimate the following error functions
\begin{align*}
&e_{\H}^n=\Pi_h\H^n-\H_h^n, \quad e_{\u}^n=\R_h\u^n-\u_h^n, \\
&\widehat e_{\u}^n=\widehat{\R_h\u^n}-\widehat\u_h^n, \quad\quad e_{p}^n=R_hp^n-p_h^n,
\end{align*}
for $n=1,2,\dots,N$.
From the system \eqref{Rh-hat}-\eqref{scheme-d-2} and the fully discrete numerical scheme \eqref{scheme-a}-\eqref{scheme-d}, we observe that the error functions $(e_{\H}^n,e_{\u}^n,\widehat e_{\u}^n,e_p^n)$ satisfy the following equations:
\begin{align}
&
\mu\bigg(\frac{e_{\H}^{n+1}-e_{\H}^n}{\tau},\w_h\bigg)
+\sigma^{-1}\Big(\nabla\times \widecheck e_{\H}^{n+\frac12},\nabla\times\w_h \Big)
+\sigma^{-1}\Big(\nabla\cdot \widecheck e_{\H}^{n+\frac12},\nabla\cdot\w_h \Big)
\nonumber\\
&\quad=
\mu\bigg\{\Big(\overline\u^{n+\frac12}\times\widetilde\H^{n+\frac12} ,\nabla\times\w_h\Big)
- \Big(\overline\u_h^{n+\frac12}\times\widetilde\H_h^{n+\frac12} ,\nabla\times\w_h\Big) \bigg\}
+R_{\H}^{n+1}(\w_h),
\label{error-a} \\
&
\bigg(\frac{\widehat e_{\u}^{n+1} - e_{\u}^n}{\tau},\v_h\bigg)
+\nu\Big(\nabla \overline e_{\u}^{n+\frac12},\nabla\v_h\Big)
-\Big(e_p^n,\nabla\cdot\v_h\Big)
\nonumber\\
&\quad=
\bigg(\frac{\widehat{\R_h\u^{n+1}}-\R_h\u^{n+1}}{\tau},\v_h\bigg)
+\nu\bigg(\nabla\Big(\frac{\widehat{\R_h\u^{n+1}} - \R_h\u^{n+1}}{2}\Big),\nabla\v_h\bigg)
\nonumber\\
&\quad\quad
-\bigg(R_hp^n-\frac{R_hp^{n+1}+R_hp^n}{2},\nabla\cdot\v_h\bigg)
-\bigg\{ b\Big(\widetilde\u^{n+\frac12}, \overline\u^{n+\frac12},\v_h \Big)
- b\Big(\widetilde\u_h^{n+\frac12}, \overline\u_h^{n+\frac12},\v_h \Big) \bigg\}
\nonumber\\
&\quad\quad
-\mu\bigg\{ \Big(\widetilde\H^{n+\frac12}\times(\nabla\times\widecheck\H^{n+\frac12}),\v_h\Big)
- \Big(\widetilde\H_h^{n+\frac12}\times(\nabla\times\widecheck\H_h^{n+\frac12}),\v_h\Big) \bigg\}
+R_{\u}^{n+1}(\v_h),
\label{error-b} \\
&
\bigg(\frac{e_{\u}^{n+1}-\widehat e_{\u}^{n+1}}{\tau},\l_h\bigg) - \frac12\Big(e_p^{n+1}-e_p^n,\nabla\cdot\l_h\Big)=0,  \label{error-c} \\
&
\Big(\nabla\cdot e_{\u}^{n+1},q_h\Big)=0,   \label{error-d}
\end{align}
for any $(\w_h,\v_h,\l_h,q_h)\in({\bm S}_h,{\bm X}_h,{\bm X}_h,M_h)$ and $n=1,2,\dots,N-1$.

\subsection{Proof of Theorem \ref{thm-error}}
In this subsection, we present a detailed proof of Theorem \ref{thm-error}. The following lemma will be used in the analysis.
\begin{lemma}\label{lem}
Under the regularity assumption \eqref{reg-asp},
the Stokes projection defined in \eqref{def-Rh1}-\eqref{def-Rh2} satisfies the following estimates:
\begin{align}
&
\|\nabla_hR_h\partial_t p\|_{L^3} \le C, \label{lem-est0} \\
&
\|\nabla(\nabla_hR_h\partial_t p)\|_{L^2}\le C \ell_h, \label{lem-est1}
\end{align}
where $C$ is a positive constant independent of $h$; $\ell_h=1$ if $\nabla p|_{\partial\Omega}={\bm 0}$, otherwise $\ell_h=h^{-\frac12}$. 
\end{lemma}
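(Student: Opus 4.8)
The plan is to reduce both bounds to two clean ingredients by first establishing an exact algebraic identity for $\nabla_h R_h\partial_t p$ in ${\bm X}_h$. I would differentiate the Stokes projection relation \eqref{def-Rh1} in time (the projection commutes with $\partial_t$) to get, for all $\v_h\in{\bm X}_h$,
\[
\nu(\nabla\bm\rho,\nabla\v_h)=(\theta_h,\nabla\cdot\v_h),\qquad \bm\rho:=\partial_t(\u-\R_h\u),\quad \theta_h:=\partial_t(p-R_hp).
\]
Combining this with the definition \eqref{def-nablah} of $\nabla_h$ and the integration by parts $(\partial_t p,\nabla\cdot\v_h)=-(\nabla\partial_t p,\v_h)$ (legitimate since $\v_h\in{\bf H}^1_0(\Omega)$ and $\partial_t p\in H^1$), I obtain $(\nabla_h R_h\partial_t p,\v_h)=(\nabla\partial_t p,\v_h)+\nu(\nabla\bm\rho,\nabla\v_h)$ for all $\v_h\in{\bm X}_h$, that is,
\[
\nabla_h R_h\partial_t p={\bm P}_h\nabla\partial_t p+\nu\,\g_h,
\]
where $\g_h\in{\bm X}_h$ is defined by $(\g_h,\v_h)=(\nabla\bm\rho,\nabla\v_h)$, equivalently $\nu(\g_h,\v_h)=(\theta_h,\nabla\cdot\v_h)$. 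This splits each target norm into a \emph{main} contribution from ${\bm P}_h\nabla\partial_t p$ and a \emph{correction} from $\g_h$.

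For the correction, I take $\v_h=\g_h$ and use the inverse inequality \eqref{inverse-1} to bound $\|\nabla\cdot\g_h\|_{L^2}\le Ch^{-1}\|\g_h\|_{L^2}$, so that $\nu\|\g_h\|_{L^2}^2\le\|\theta_h\|_{L^2}\|\nabla\cdot\g_h\|_{L^2}$ gives $\|\g_h\|_{L^2}\le Ch^{-1}\|\theta_h\|_{L^2}$. The key observation is that, because $r\ge2$, the regularity \eqref{reg-asp} permits the use of \eqref{Ph-est-2} with $\ell=2$ to obtain the \emph{superconvergent} pressure estimate $\|\theta_h\|_{L^2}\le Ch^2$; hence $\|\g_h\|_{L^2}\le Ch$. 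A second application of the inverse inequality then yields $\|\nabla\g_h\|_{L^2}\le Ch^{-1}\|\g_h\|_{L^2}\le C$, and $\|\g_h\|_{L^3}\le Ch^{-d/6}\|\g_h\|_{L^2}\le Ch^{1-d/6}\le C$. Thus the correction is harmless for both \eqref{lem-est0} and \eqref{lem-est1}; it is precisely the $h^2$ (rather than $h$) pressure estimate, available only for $r\ge2$, that makes this term stay bounded after the two inverse estimates.

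For the main term, \eqref{lem-est0} follows from the $L^3$-stability of the $L^2$ projection, $\|{\bm P}_h\nabla\partial_t p\|_{L^3}\le C\|\nabla\partial_t p\|_{L^3}\le C\|\partial_t p\|_{H^2}\le C$, which with the correction bound completes the first estimate. The heart of the lemma is \eqref{lem-est1}, which reduces to the $H^1$-stability of ${\bm P}_h$ onto the zero-trace space ${\bm X}_h$ applied to $\nabla\partial_t p$. I would prove $\|\nabla{\bm P}_h\nabla\partial_t p\|_{L^2}\le C\ell_h$ by the inverse-plus-approximation device: for any $\bm\chi_h\in{\bm X}_h$,
\[
\|\nabla{\bm P}_h\nabla\partial_t p\|_{L^2}\le Ch^{-1}\|{\bm P}_h\nabla\partial_t p-\bm\chi_h\|_{L^2}+\|\nabla\bm\chi_h\|_{L^2}\le Ch^{-1}\|\nabla\partial_t p-\bm\chi_h\|_{L^2}+\|\nabla\bm\chi_h\|_{L^2},
\]
using the $L^2$-optimality of ${\bm P}_h$. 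The dichotomy enters through the choice of $\bm\chi_h$: when $\nabla p|_{\partial\Omega}=\0$ one has $\nabla\partial_t p\in{\bf H}^1_0(\Omega)$, so a Scott--Zhang interpolant into ${\bm X}_h$ gives $\|\nabla\partial_t p-\bm\chi_h\|_{L^2}\le Ch\|\nabla\partial_t p\|_{H^1}$ and $\|\nabla\bm\chi_h\|_{L^2}\le C\|\nabla\partial_t p\|_{H^1}$, whence the bound is $C=C\ell_h$; when $\nabla p|_{\partial\Omega}\ne\0$ the trace of $\nabla\partial_t p$ does not vanish, so $\bm\chi_h\in{\bm X}_h$ must be cut off to zero in a boundary strip of width $O(h)$, and a trace/scaling estimate over this strip yields $\|\nabla\partial_t p-\bm\chi_h\|_{L^2}\le Ch^{1/2}\|\nabla\partial_t p\|_{H^1}$ together with $\|\nabla\bm\chi_h\|_{L^2}\le Ch^{-1/2}\|\nabla\partial_t p\|_{H^1}$, producing the loss $\ell_h=h^{-1/2}$. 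This boundary-layer analysis of the projection is the main obstacle; everything else is bookkeeping with the projection and inverse estimates already recorded in the excerpt.
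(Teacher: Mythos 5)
Your proof is correct, and it reaches the two estimates by a genuinely different decomposition than the paper. Where the paper compares $\nabla_hR_h\partial_tp$ with the intermediate quantity $\nabla_hP_h\partial_tp$ and then with ${\bm P}_h\nabla\partial_tp$ --- using the inverse estimates for $\nabla_h$ in Lemma~\ref{lem-nablah}, a duality argument in $L^3$/$L^{3/2}$ to control ${\bm P}_h\nabla\partial_tp-\nabla_hP_h\partial_tp$, and the superconvergent projection bounds $\|R_h\partial_tp-\partial_tp\|+\|\partial_tp-P_h\partial_tp\|\le Ch^2$ to absorb the factors $h^{-1}$, $h^{-2}$ --- you instead derive the exact identity $\nabla_hR_h\partial_tp={\bm P}_h\nabla\partial_tp+\nu\g_h$ from the time-differentiated Stokes projection equations, and bound the correction $\g_h$ by the energy characterization $\nu\|\g_h\|_{L^2}^2=(\theta_h,\nabla\cdot\g_h)$, one inverse inequality, and the single superconvergent estimate $\|\partial_t(p-R_hp)\|_{L^2}\le Ch^2$ (valid precisely because $r\ge2$ in \eqref{Ph-est-2}); this is structurally cleaner, dispenses with the duality step and with the $L^3$ inverse bound \eqref{inverse-3} for $\nabla_h$ entirely, and makes transparent that the whole discrepancy between $\nabla_hR_h\partial_tp$ and ${\bm P}_h\nabla\partial_tp$ is the elliptic residual of the differentiated Stokes system. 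For the main term the two proofs coincide in substance: the $L^3$ bound is the same stability argument, and your ``inverse-plus-approximation'' treatment of $\|\nabla{\bm P}_h\nabla\partial_tp\|_{L^2}$ with a cut-off in an $O(h)$ boundary strip and the strip estimate $\|{\bm g}\|_{L^2(\{\mathrm{dist}(x,\partial\Omega)\le h\})}\le Ch^{1/2}\|{\bm g}\|_{H^1}$ is the same boundary-layer mechanism the paper uses (it splits ${\bm g}=({\bm g}-\chi{\bm g})+\chi{\bm g}$ and applies the $H^1$-stability \eqref{Ih-est3} to the zero-trace piece and an inverse inequality to the strip piece), yielding the identical dichotomy $\ell_h=1$ versus $\ell_h=h^{-1/2}$. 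The only cost of your route is that it imports Scott--Zhang interpolation, which is standard but not among the tools recorded in the paper's preliminaries, whereas the paper stays within its stated toolkit; conversely, your route needs fewer auxiliary comparison lemmas.
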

\begin{proof}
By the regularity assumption \eqref{reg-asp} and the $L^3$ stability estimate of the $L^2$ projection, i.e., \eqref{Ih-est1}, we see that
\begin{align}
\|{\bm P}_h\nabla\partial_t p\|_{L^3}
\le
C\|\nabla\partial_t p\|_{L^3}
\le
C. \label{lem-est0-1}
\end{align}
Since
\begin{align*}
(\v_h,{\bm P}_h\nabla\partial_tp - \nabla_hP_h\partial_tp)
&=(\v_h,\nabla\partial_tp)+(\nabla\cdot\v_h,P_h\partial_tp)  \\
&=  - (\nabla\cdot\v_h,\partial_tp) +(\nabla\cdot\v_h,P_h\partial_tp)  \\
&\le \|\nabla\cdot\v_h\|_{L^{\frac{3}{2}}}\|\partial_t p-P_h\partial_t p\|_{L^3} \\
&\le Ch^{-1}\|\v_h\|_{L^{\frac32}}h\|\partial_tp\|_{W^{1,3}} \\
&\le C\|\v_h\|_{L^{\frac32}}\|\partial_tp\|_{W^{1,3}} ,
\end{align*}
for any $\v_h\in{\bf X}_h$, by the duality between $L^{\frac32}$ and $L^3$, we conclude that
\begin{align}
\|{\bm P}_h\nabla\partial_tp-\nabla_hP_h\partial_tp\|_{L^3}\le C .  \label{lem-eq}
\end{align}
Consequently, with the help of the inverse inequality \eqref{inverse-3}, we obtain
\begin{align*}
\|\nabla_hR_h\partial_tp\|_{L^3}
&\le
\|\nabla_hR_h\partial_tp- \nabla_hP_h\partial_tp\|_{L^3}
+\|\nabla_hP_h\partial_tp-{\bm P}_h\nabla\partial_tp\|_{L^3}
+\|{\bm P}_h\nabla\partial_tp\|_{L^3}
\nonumber\\
&\le
\|\nabla_hR_h\partial_tp-\nabla_hP_h\partial_tp\|_{L^3}
+C
\nonumber\\
&\le
Ch^{-1}\|R_h\partial_tp-P_h\partial_tp\|_{L^3}
+C
\nonumber\\
&\le
Ch^{-1}\|R_h\partial_tp-\partial_tp\|_{L^3}
+Ch^{-1}\|\partial_tp-P_h\partial_tp\|_{L^3}
+C
\nonumber\\
&\le
Ch^{-1}h^2+Ch^{-1}h^2+C
\le
C,
\end{align*}
in which \eqref{Ih-est2} and the projection estimate \eqref{Ph-est-2} have been used in the second last inequality.

Inequality \eqref{lem-est1} could be proved in a similar manner.
If $\nabla p|_{\partial\Omega}={\bm 0}$,
by the regularity assumption \eqref{reg-asp} and the $H^1$ stability estimate of the $L^2$ projection, we have
\begin{align}
\|\nabla {\bm P}_h\nabla\partial_t p\|_{L^2}
\le
\|{\bm P}_h\nabla\partial_t p\|_{H^1}
\le
C\|\nabla\partial_t p\|_{H^1}
\le
C. \label{lem-est1-1}
\end{align}
If $\nabla p|_{\partial\Omega}\neq{\bm 0}$, we estimate $\|\nabla{\bm P}_h\nabla\partial_tp\|_{L^2}$ in another way. Let $\chi$ be a smooth cut-off function such that
\begin{align*}
\left\{
\begin{array}{ll}
\chi=1 &  \mbox{on }\partial\Omega,  \\
\chi=0 &  \mbox{at points }x \mbox{ such that dist}(x,\partial\Omega)\ge h, \\
0\le\chi\le 1 &\mbox{in }\Omega, \\
\|\nabla^k\chi\|_{L^\infty}\le Ch^{-k}& \mbox{in }\Omega.
\end{array}
\right.
\end{align*}
Then we let ${\bm g}=\nabla\partial_tp$ and get
\begin{align*}
\|\nabla {\bm P}_h{\bm g}\|_{L^2}
\le
\|\nabla {\bm P}_h({\bm g}-\chi {\bm g})\|_{L^2}
+\|\nabla {\bm P}_h\chi {\bm g}\|_{L^2}  .
\end{align*}
Since ${\bm g}-\chi{\bm g}={\bm 0}$ on $\partial\Omega$, it follows that
\begin{align*}
\|\nabla {\bm P}_h({\bm g}-\chi {\bm g})\|_{L^2}
\le
C\|\nabla ({\bm g}-\chi {\bm g})\|_{L^2}
&\le
C\|\nabla  {\bm g}\|_{L^2}
+C\|\nabla\chi\otimes  {\bm g}\|_{L^2} \\
&\le
C\|\nabla  {\bm g}\|_{L^2}
+Ch^{-1}\Big(\int_{\mbox{dist}(x,\partial\Omega)\le h}|{\bm g}|^2\d x\Big)^{\frac12}
\le Ch^{-\frac12}.
\end{align*}
By the inverse inequality, we have
\begin{align*}
\|\nabla{\bm P}_h\chi{\bm g}\|_{L^2}
\le
Ch^{-1}\|{\bm P}_h\chi{\bm g}\|_{L^2}
\le
Ch^{-1}\|\chi{\bm g}\|_{L^2}
\le
Ch^{-1}\Big(\int_{\mbox{dist}(x,\partial\Omega)\le h}|{\bm g}|^2\d x\Big)^{\frac12}
\le
Ch^{-\frac12}.
\end{align*}
Therefore,
\begin{align}
\|\nabla{\bm P}_h{\bm g}\|_{L^2}\le Ch^{-\frac12},
\end{align}
which together with \eqref{lem-est1-1} implies
\begin{align}\label{est-nablaPh-2}
\|\nabla{\bm P}_h\nabla\partial_tp\|_{L^2}\le C\ell_h.
\end{align}
Using similar techniques in the derivation of~\eqref{lem-eq}, we get
\begin{align}
\|{\bm P}_h\nabla\partial_tp-\nabla_hP_h\partial_tp\|_{L^2}\le Ch .
\end{align}
By the inverse inequalities \eqref{inverse-1}-\eqref{inverse-2}, it can be shown that
\begin{align}
&\|\nabla(\nabla_hR_h\partial_tp-{\bm P}_h\nabla\partial_tp) \|_{L^2}  \nonumber\\
&\le
Ch^{-1}\|\nabla_hR_h\partial_tp-{\bm P}_h\nabla\partial_tp \|_{L^2} \nonumber\\
&\le
Ch^{-1}\|\nabla_hR_h\partial_tp-\nabla_hP_h\partial_tp \|_{L^2}
+Ch^{-1}\|\nabla_hP_h\partial_tp -{\bm P}_h\nabla\partial_tp \|_{L^2}
\nonumber\\
&\le
Ch^{-2}\|R_h\partial_tp - P_h\partial_tp \|_{L^2}
+Ch^{-1}h
 \nonumber\\
&\le
Ch^{-2}\|R_h\partial_tp - \partial_tp \|_{L^2}
+Ch^{-2}\|\partial_tp - P_h\partial_tp \|_{L^2} +C \nonumber\\
&\le
Ch^{-2}h^2+Ch^{-2}h^2 +C
\le
C, \label{lem-est1-2}
\end{align}
in which \eqref{Ih-est2} and \eqref{Ph-est-2} have been used again in the second to last inequality. Finally, by the triangle inequality and \eqref{lem-est1-1}-\eqref{lem-est1-2}, the estimate \eqref{lem-est1} follows immediately.
\end{proof}

\vspace{.05in}
Now we proceed with the proof of Theorem~\ref{thm-error}.

\begin{proof}[Proof of Theorem \ref{thm-error}.]
By Theorem \ref{stability}, the existence and uniqueness of numerical solution $(\H_h^n,\u_h^n,p_h^n)$, $n=2,3,\dots,N$, follows immediately since the scheme \eqref{scheme-a}-\eqref{scheme-d} is linearized and the corresponding homogeneous equations only admit zero solutions.

In the following, we present the analysis of the error equations \eqref{error-a}-\eqref{error-d} and then establish the error estimates given in Theorem \ref{thm-error}. First of all, we
make the following induction assumption for the error functions at the previous time steps:
\begin{align}
\|e_\H^m\|_{L^2}+\|e_\u^m\|_{L^2} \le C_0^\star(\ell_h\tau^2 +h^2) ,
\label{est-1}
\end{align}
for $m\le n$. Such an induction assumption will be recovered by the error estimate at the next time step $t_{n+1}$.

For $m=0,1$, \eqref{est-1} follows from Remark \ref{H1u1} immediately. The induction assumption \eqref{est-1} (for $m\le n$) yields
\begin{align}
\|\H_h^m\|_{W^{1,3}}
&\le
\|I_h\H^m\|_{W^{1,3}}+\|I_h\H^m-\Pi_h\H^m\|_{W^{1,3}} +\|e_\H^m\|_{W^{1,3}}
\nonumber\\
&\le
C\|\H^m\|_{W^{1,3}}
+Ch^{-\frac{d}{6}}\|I_h\H^m-\Pi_h\H^m\|_{H^1} +Ch^{-1-\frac{d}{6}}\|e_\H^m\|_{L^2}
\nonumber\\
&\le
C\|\H^m\|_{W^{1,3}}
+Ch^{-\frac{d}{6}}\|I_h\H^m-\H^m\|_{H^1}
+Ch^{-\frac{d}{6}}\|\H^m-\Pi_h\H^m\|_{H^1} \nonumber\\
&\quad
+Ch^{-1-\frac{d}{6}}C_0^\star(\ell_h\tau^2 +h^2) 
\nonumber\\
&\le
C\|\H^m\|_{W^{1,3}}
+Ch^{-\frac{d}{6}}h^2
+Ch^{-\frac{d}{6}}h^2
+Ch^{-1-\frac{d}{6}} C_0^\star(\ell_h\tau^2 +h^2) 
\nonumber\\
&\le
K+1, \label{bound-H}  \\
\|\u_h^m\|_{L^\infty}
&\le
\|\R_h\u^m\|_{L^\infty} + \|e_\u^m\|_{L^\infty} \nonumber\\
&\le
\|\u^m\|_{W^{1,3}} + Ch^{-\frac{d}{2}}\|e_\u^m\|_{L^2}
\quad(\mbox{by \eqref{ph-infty}})
\nonumber\\
&\le
\|\u^m\|_{W^{1,3}} + Ch^{-\frac{d}{2}} C_0^\star(\ell_h\tau^2 +h^2) 
\nonumber\\
&\le
K+1 , \label{bound-u}
\end{align}
for $\tau\le \frac{h}{\sqrt{2CC_0^\star}}$ and $h<h_0$, where $d=2,3,$ denotes the dimension of $\Omega$ and $h_0$ is a small positive constant. Here, $I_h$ denotes the standard Lagrange interpolation and its $W^{1,3}$ stability estimate has been used. Subsequently, we will establish the error estimate at $m=n+1$ and recover \eqref{est-1}.

{\it\underline{Step 1:} \, Estimate of \eqref{error-a}.} \,
Taking $\w_h=\widecheck e_{\H}^{n+\frac12}$ into \eqref{error-a} yields
\begin{align}
&\frac{\mu}{2\tau}\Big(\|e_{\H}^{n+1}\|_{L^2}^2 - \|e_{\H}^{n}\|_{L^2}^2\Big)
+\frac{\mu}{8\tau} \Big(\|e_{\H}^{n+1}-e_{\H}^n\|_{L^2}^2 - \|e_{\H}^{n}-e_{\H}^{n-1}\|_{L^2}^2 \Big) \nonumber\\
&\quad
+\sigma^{-1}\|\nabla\times\widecheck e_{\H}^{n+\frac12}\|_{L^2}^2
+\sigma^{-1}\|\nabla\cdot\widecheck e_{\H}^{n+\frac12}\|_{L^2}^2
\nonumber\\
&\le
\mu\bigg\{\Big(\overline\u^{n+\frac12}\times\widetilde\H^{n+\frac12} ,\nabla\times \widecheck e_{\H}^{n+\frac12} \Big)
- \Big(\overline\u_h^{n+\frac12}\times\widetilde\H_h^{n+\frac12} ,\nabla\times \widecheck e_{\H}^{n+\frac12}\Big) \bigg\}
+R_{\H}^{n+1}(\widecheck e_{\H}^{n+\frac12}) ,
\label{est-eH-1}
\end{align}
where we have used the identity
\begin{align}
\bigg(\frac{e_{\H}^{n+1}-e_{\H}^n}{\tau} , \widecheck e_{\H}^{n+\frac12} \bigg)
&=
\frac{1}{2\tau}\Big(\|e_{\H}^{n+1}\|_{L^2}^2 - \|e_{\H}^{n}\|_{L^2}^2\Big)
+\frac{1}{8\tau} \Big(\|e_{\H}^{n+1}-e_{\H}^n\|_{L^2}^2 - \|e_{\H}^{n}-e_{\H}^{n-1}\|_{L^2}^2 \Big)
\nonumber\\
&\quad
+\frac{1}{8\tau} \|e_{\H}^{n+1}-2e_{\H}^{n}+e_{\H}^{n-1}\|_{L^2}^2 .
\end{align}
By \eqref{reg-asp} and \eqref{Pih-est}, it can be shown that
\begin{align*}
R_{\H}^{n+1}(\widecheck e_{\H}^{n+\frac12})
\le
 C(\tau^2+h^{r+1})^2
+C\|\widecheck e_\H^{n+\frac12}\|_{L^2}^2
+\frac{1}{2\sigma}\|\nabla\times \widecheck e_{\H}^{n+\frac12}\|_{L^2}^2
+\frac{1}{2\sigma}\|\nabla\cdot \widecheck e_{\H}^{n+\frac12}\|_{L^2}^2 .
\end{align*}
Noticing that $\widehat\u^{n+1}:=\u^{n+1}$ and \eqref{Rh-hat2}, we obtain
\begin{align*}
&\mu\bigg\{\Big(\overline\u^{n+\frac12}\times\widetilde\H^{n+\frac12} ,\nabla\times \widecheck e_{\H}^{n+\frac12} \Big)
- \Big(\overline\u_h^{n+\frac12}\times\widetilde\H_h^{n+\frac12} ,\nabla\times \widecheck e_{\H}^{n+\frac12}\Big)  \bigg\}
\nonumber\\
&=
\mu\Big(\overline\u^{n+\frac12}\times\big(\widetilde\H^{n+\frac12} - \Pi_h\widetilde\H^{n+\frac12}\big) ,\nabla\times \widecheck e_{\H}^{n+\frac12} \Big)
+\mu\Big(\overline\u^{n+\frac12}\times \widetilde e_{\H}^{n+\frac12} ,\nabla\times \widecheck e_{\H}^{n+\frac12} \Big)
\nonumber\\
&\quad
+\mu\Big(\Big(\overline\u^{n+\frac12} - \frac{\R_h\u^{n+1}+\R_h\u^n}{2} \Big)\times\widetilde\H_h^{n+\frac12} ,\nabla\times \widecheck e_{\H}^{n+\frac12}\Big)
\nonumber\\
&\quad
+\mu\Big(\frac{R_h\u^{n+1}-\widehat{\R_h\u^{n+1}}}{2}\times\widetilde\H_h^{n+\frac12} ,\nabla\times \widecheck e_{\H}^{n+\frac12}\Big)
\nonumber\\
&\quad
+\mu\Big(\overline e_{\u}^{n+\frac12} \times\widetilde\H_h^{n+\frac12} ,\nabla\times \widecheck e_{\H}^{n+\frac12}\Big)
\nonumber\\
&\le
\mu\|\overline\u^{n+\frac12}\|_{L^\infty} \|\widetilde\H^{n+\frac12} - \Pi_h\widetilde\H^{n+\frac12}\|_{L^2}\|\nabla\times \widecheck e_{\H}^{n+\frac12} \|_{L^2}
+\mu\|\overline\u^{n+\frac12}\|_{L^\infty} \|\widetilde e_{\H}^{n+\frac12} \|_{L^2} \|\nabla\times \widecheck e_{\H}^{n+\frac12} \|_{L^2}
\nonumber\\
&\quad
+\mu\Big\|\frac{\u^{n+1}+\u^n}{2} - \frac{\R_h\u^{n+1}+\R_h\u^n}{2}\Big\|_{L^3} \|\widetilde\H_h^{n+\frac12}\|_{L^6} \|\nabla\times \widecheck e_{\H}^{n+\frac12}\|_{L^2}
\nonumber\\
&\quad
+\frac{\mu\tau}{4}\|\nabla_h(R_hp^{n+1}-R_hp^n)\|_{L^3}   \|\widetilde \H_h^{n+\frac12}\|_{L^6} \|\nabla\times \widecheck e_{\H}^{n+\frac12}\|_{L^2}
\nonumber\\
&\quad
+\mu\Big(\overline e_{\u}^{n+\frac12} \times\widetilde\H_h^{n+\frac12} ,\nabla\times \widecheck e_{\H}^{n+\frac12}\Big)
\nonumber\\
&\le
Ch^{2(r+1)}+\frac{1}{4\sigma}\|\nabla\times\widecheck e_\H^{n+\frac12}\|_{L^2}^2
+ C\|\widetilde e_\H^{n+\frac12}\|_{L^2}^2
+C\tau^4
+\mu\Big(\overline e_{\u}^{n+\frac12} \times\widetilde\H_h^{n+\frac12} ,\nabla\times \widecheck e_{\H}^{n+\frac12}\Big) ,
\end{align*}
where in the last inequality we have used the projection estimates \eqref{Ph-est1} and \eqref{Pih-est}, \eqref{bound-H}, Lemma \ref{lem} and the following inequality:
\begin{align*}
\frac{\mu\tau}{4}\|\nabla_h(R_hp^{n+1}-R_hp^n)\|_{L^3}
\le
C\tau^2 .
\end{align*}
With the above results, \eqref{est-eH-1} is reduced to
\begin{align}
&\frac{\mu}{2\tau}\Big(\|e_{\H}^{n+1}\|_{L^2}^2 - \|e_{\H}^{n}\|_{L^2}^2\Big)
+\frac{\mu}{8\tau} \Big(\|e_{\H}^{n+1}-e_{\H}^n\|_{L^2}^2 - \|e_{\H}^{n}-e_{\H}^{n-1}\|_{L^2}^2 \Big) \nonumber\\
&\quad+\frac{1}{4\sigma}\|\nabla\times\widecheck e_{\H}^{n+\frac12}\|_{L^2}^2
+\frac{1}{4\sigma}\|\nabla\cdot\widecheck e_{\H}^{n+\frac12}\|_{L^2}^2
\nonumber\\
&\le
C(\tau^2+h^{r+1})^2
+C\|\widecheck e_\H^{n+\frac12}\|_{L^2}^2
+ C\|\widetilde e_\H^{n+\frac12}\|_{L^2}^2
+\mu\Big(\overline e_{\u}^{n+\frac12} \times\widetilde\H_h^{n+\frac12} ,\nabla\times \widecheck e_{\H}^{n+\frac12}\Big) .
\label{est-eH-2}
\end{align}

{\it\underline{Step 2:} \, Estimate of \eqref{error-b}.} \,
Taking $\v_h=\overline e_\u^{n+\frac12}=\frac{1}{2}(\widehat e_\u^{n+1}+e_\u^n)$ into \eqref{error-b} leads to
\begin{align}
&
\frac{1}{2\tau}\Big(\|\widehat e_\u^{n+1}\|_{L^2}^2 -\| e_\u^n\|_{L^2}^2  \Big)
+\nu\|\nabla\overline e_\u^{n+\frac12}\|_{L^2}^2
-\big(e_p^n,\nabla\cdot\overline e_\u^{n+\frac12} \big)
\nonumber\\
&\le
\bigg(\frac{\widehat{\R_h\u^{n+1}}-\R_h\u^{n+1}}{\tau},\overline e_\u^{n+\frac12}\bigg)
+\nu\bigg(\nabla\Big(\frac{\widehat{\R_h\u^{n+1}} - \R_h\u^{n+1}}{2}\Big),\nabla\overline e_\u^{n+\frac12}\bigg)
\nonumber\\
&\quad
-\bigg(R_hp^n-\frac{R_hp^{n+1}+R_hp^n}{2},\nabla\cdot\overline e_\u^{n+\frac12}\bigg)
\nonumber\\
&\quad
-\bigg\{ b\Big(\widetilde\u^{n+\frac12}, \overline\u^{n+\frac12},\overline e_\u^{n+\frac12} \Big)
- b\Big(\widetilde\u_h^{n+\frac12}, \overline\u_h^{n+\frac12},\overline e_\u^{n+\frac12} \Big) \bigg\}
\nonumber\\
&\quad
-\mu\bigg\{  \Big(\widetilde\H^{n+\frac12}\times(\nabla\times\widecheck\H^{n+\frac12}),\overline e_\u^{n+\frac12}\Big)
-  \Big(\widetilde\H_h^{n+\frac12}\times(\nabla\times\widecheck\H_h^{n+\frac12}),\overline e_\u^{n+\frac12}\Big) \bigg\} \nonumber\\
&\quad
+R_{\u}^{n+1}(\overline e_\u^{n+\frac12})
\nonumber\\
&=:
\sum_{j=1}^6 \mathcal{I}_j .
\label{est-eu-1}
\end{align}
In the following, we estimate $\mathcal{I}_j$, $j=1,2,\dots,6$, respectively.
By using \eqref{Rh-hat}, we have
\begin{align*}
\mathcal{I}_1+\mathcal{I}_3
&=
-\bigg(\frac{R_hp^{n+1}+R_hp^n}{2} - \frac{R_hp^{n+1}+R_hp^n}{2}, \nabla\cdot \overline e_\u^{n+\frac12} \bigg)
=0.
\end{align*}
By \eqref{Rh-hat2}, $\mathcal{I}_2$ becomes
\begin{align*}
\mathcal{I}_2
&=
\frac{\nu\tau}{4}\bigg(\nabla \Big(\nabla_h(R_hp^{n+1}-R_hp^n) \Big),\nabla\overline e_\u^{n+\frac12}\bigg) \\
&\le
C\tau^2\|\nabla(\nabla_h(R_hp^{n+1}-R_hp^n))\|_{L^2}^2
+\varepsilon\|\nabla\overline e_\u^{n+\frac12}\|_{L^2}^2
\\
&\le
C{\color{blue}\ell_h^2}\tau^4+\varepsilon\|\nabla\overline e_\u^{n+\frac12}\|_{L^2}^2 ,
\end{align*}
where we have used the second result in Lemma \ref{lem}.
By the definition of $b(\u,\v,\w)$ in \eqref{def-b}, we can rewrite $\mathcal{I}_4$ as
\begin{align*}
\mathcal{I}_4
&=
\frac12\bigg\{ \Big(\widetilde\u_h^{n+\frac12}\cdot\nabla\overline\u_h^{n+\frac12},\overline e_\u^{n+\frac12} \Big)
-\Big(\widetilde\u^{n+\frac12}\cdot\nabla\overline\u^{n+\frac12},\overline e_\u^{n+\frac12} \Big) \bigg\}
\nonumber\\
&\quad
-\frac12\bigg\{ \Big(\widetilde\u_h^{n+\frac12}\cdot\nabla\overline e_\u^{n+\frac12},\overline \u_h^{n+\frac12} \Big)
-\Big(\widetilde\u^{n+\frac12}\cdot\nabla\overline e_\u^{n+\frac12},\overline\u^{n+\frac12} \Big) \bigg\}
\nonumber\\
&=
-\frac12\bigg\{ \Big(\widetilde\u_h^{n+\frac12}\cdot\nabla\overline e_\u^{n+\frac12},\overline e_\u^{n+\frac12} \Big)
+\Big(\widetilde\u_h^{n+\frac12}\cdot\nabla(\overline\u^{n+\frac12}-\overline{\R_h\u}^{n+\frac12}),\overline e_\u^{n+\frac12} \Big)
\nonumber\\
&\quad\quad\quad
+\Big(\widetilde e_\u^{n+\frac12}\cdot\nabla\overline\u^{n+\frac12}, \overline e_\u^{n+\frac12}  \Big)
+\Big((\widetilde\u^{n+\frac12}-\widetilde{\R_h\u}^{n+\frac12})\cdot\nabla\overline\u^{n+\frac12},\overline e_\u^{n+\frac12}  \Big)\bigg\}
\nonumber\\
&\quad
+\frac12\bigg\{ \Big(\widetilde\u_h^{n+\frac12}\cdot\nabla\overline e_\u^{n+\frac12},\overline e_\u^{n+\frac12} \Big)
+\Big(\widetilde\u_h^{n+\frac12}\cdot\nabla \overline e_\u^{n+\frac12},\overline\u^{n+\frac12}-\overline{\R_h\u}^{n+\frac12} \Big)
\nonumber\\
&\quad\quad\quad
+\Big(\widetilde e_\u^{n+\frac12}\cdot\nabla \overline e_\u^{n+\frac12}  ,\overline\u^{n+\frac12}\Big)
+\Big((\widetilde\u^{n+\frac12}-\widetilde{\R_h\u}^{n+\frac12})\cdot\nabla\overline e_\u^{n+\frac12} ,\overline\u^{n+\frac12} \Big)\bigg\}
\nonumber\\
&=:
\frac12\sum_{k=1}^8\mathcal{I}_{4,k}.
\end{align*}
In the estimate of $\mathcal{I}_4$, the most difficult processing is the control of $\mathcal{I}_{4,2}$, for which an application of integration by parts implies that
\begin{align*}
\mathcal{I}_{4,2}
&=
\Big((\nabla\cdot\widetilde\u_h^{n+\frac12})(\overline\u^{n+\frac12}-\overline{\R_h\u}^{n+\frac12}),\overline e_\u^{n+\frac12} \Big)
+\Big(\widetilde\u_h^{n+\frac12}\cdot\nabla\overline e_\u^{n+\frac12},\overline\u^{n+\frac12}-\overline{\R_h\u}^{n+\frac12} \Big) \\
&=
\Big((\nabla\cdot\widetilde{\R_h\u}^{n+\frac12})(\overline\u^{n+\frac12}-\overline{\R_h\u}^{n+\frac12}),\overline e_\u^{n+\frac12} \Big)
-\Big((\nabla\cdot\widetilde e_\u^{n+\frac12})(\overline\u^{n+\frac12}-\overline{\R_h\u}^{n+\frac12}),\overline e_\u^{n+\frac12} \Big) \\
&\quad
+\Big(\widetilde\u_h^{n+\frac12}\cdot\nabla\overline e_\u^{n+\frac12},\overline\u^{n+\frac12}-\overline{\R_h\u}^{n+\frac12} \Big) \\
&\le
\|\nabla\cdot\widetilde{\R_h\u}^{n+\frac12}\|_{L^3} \|\overline\u^{n+\frac12}-\overline{\R_h\u}^{n+\frac12}\|_{L^2} \|\overline e_\u^{n+\frac12} \|_{L^6}  \\
&\quad
+\|\nabla\cdot\widetilde e_\u^{n+\frac12}\|_{L^3}  \|\overline\u^{n+\frac12}-\overline{\R_h\u}^{n+\frac12}\|_{L^2} \|\overline e_\u^{n+\frac12} \|_{L^6}  \\
&\quad
+\|\widetilde\u_h^{n+\frac12}\|_{L^\infty} \|\nabla\overline e_\u^{n+\frac12}\|_{L^2} \|\overline\u^{n+\frac12}-\overline{\R_h\u}^{n+\frac12} \|_{L^2} \\
&\le
C(\tau^2+h^{r+1})^2 +\varepsilon\|\nabla\overline e_\u^{n+\frac12}\|_{L^2}^2
+C\|\widetilde e_\u^{n+\frac12}\|_{L^2}^2,
\end{align*}
where we have used \eqref{ph-infty}, \eqref{Rh-hat2}, \eqref{bound-u},
\begin{align*}
\|\overline\u^{n+\frac12}-\overline{\R_h\u}^{n+\frac12}\|_{L^2}
&=
\bigg\|\frac{ \u^{n+1}+\u^n}{2}-\frac{ \widehat{\R_h\u^{n+1}}+\R_h\u^n}{2}
\bigg\|_{L^2} \quad (\mbox{here use }\widehat \u^{n+1}=\u^{n+1})
\nonumber\\
&\le
\bigg\|\frac{ \u^{n+1}+\u^n}{2}-\frac{ \R_h\u^{n+1}+\R_h\u^n}{2}
\bigg\|_{L^2}
+\bigg\|\frac{ \R_h\u^{n+1}-\widehat{\R_h\u^{n+1}}}{2}
\bigg\|_{L^2}
\nonumber\\
&\le
Ch^{r+1}+\frac{\tau}{4}  \|\nabla_h(R_hp^{n+1}-R_hp^n)\|_{L^2}
\quad (\mbox{by \eqref{lem-est0}})
\nonumber\\
&\le
Ch^{r+1}+C\tau^2
\end{align*}
and
\begin{align*}
\|\nabla\cdot\widetilde e_\u^{n+\frac12}\|_{L^3}  \|\overline\u^{n+\frac12}-\overline{\R_h\u}^{n+\frac12}\|_{L^2}
&\le
Ch^{-1-\frac{d}{6}}\|\widetilde e_\u^{n+\frac12}\|_{L^2}  \|\overline\u^{n+\frac12}-\overline{\R_h\u}^{n+\frac12}\|_{L^2} \\
&\le
Ch^{-1-\frac{d}{6}}(h^{r+1}+\tau^2) \|\widetilde e_\u^{n+\frac12}\|_{L^2}
\quad (\mbox{by }\tau=\mathcal{O}(h))  \\
&\le
C \|\widetilde e_\u^{n+\frac12}\|_{L^2}  .
\end{align*}
The estimate for other terms of $\mathcal{I}_4$ is straightforward. Clearly, $\mathcal{I}_{4,1}$ and $\mathcal{I}_{4,5}$ are cancelled.
By \eqref{Ph-est1} and \eqref{bound-u}, we obtain
\begin{align*}
\mathcal{I}_{4,3}+\mathcal{I}_{4,4}+\sum_{k=6}^8\mathcal{I}_{4,k}
&\le
C\|\widetilde e_\u^{n+\frac12}\|_{L^2}^2
+\varepsilon\|\nabla\overline e_\u^{n+\frac12}\|_{L^2}^2
+Ch^{2(r+1)} +C\tau^4 .
\end{align*}
Above all, we are led to
\begin{align*}
\mathcal{I}_4\le
C\|\widetilde e_\u^{n+\frac12}\|_{L^2}^2
+\varepsilon\|\nabla\overline e_\u^{n+\frac12}\|_{L^2}^2
+Ch^{2(r+1)} +C\tau^4.
\end{align*}
Similarly, we can rewrite $\mathcal{I}_5$ as
\begin{align*}
\mathcal{I}_5
&=
-\mu\bigg\{
\Big((\widetilde\H^{n+\frac12}-\Pi_h\widetilde\H^{n+\frac12})\times(\nabla\times\widecheck\H^{n+\frac12}) ,\overline e_\u^{n+\frac12} \Big)
+ \Big(\widetilde e_\H^{n+\frac12}\times(\nabla\times\widecheck\H^{n+\frac12}) ,\overline e_\u^{n+\frac12} \Big)
\nonumber\\
&\quad\quad\
+ \Big( \widetilde\H_h^{n+\frac12}\times(\nabla\times(\widecheck\H^{n+\frac12}-\Pi_h\widecheck\H^{n+\frac12})) ,\overline e_\u^{n+\frac12} \Big)
+ \Big( \widetilde\H_h^{n+\frac12}\times(\nabla\times\widecheck e_\H^{n+\frac12} ) ,\overline e_\u^{n+\frac12} \Big)
\bigg\}
\nonumber\\
&=:
\sum_{k=1}^4\mathcal{I}_{5,k}.
\end{align*}
By \eqref{Pih-est}, we have
\begin{align*}
\mathcal{I}_{5,1}
+\mathcal{I}_{5,2}
&\le
\mu\|\widetilde\H^{n+\frac12}-\Pi_h\widetilde\H^{n+\frac12}\|_{L^2} \|\nabla\times\widecheck\H^{n+\frac12}\|_{L^3} \|\overline e_\u^{n+\frac12}\|_{L^6}
\nonumber\\
&\quad
+\mu\|\widetilde e_\H^{n+\frac12}\|_{L^2}\|\nabla\times\widecheck\H^{n+\frac12}\|_{L^3}\|\overline e_\u^{n+\frac12} \|_{L^6}
\nonumber\\
&\le
Ch^{2(r+1)}+\varepsilon\|\nabla\overline e_\u^{n+\frac12}\|_{L^2}^2+C\|\widetilde e_\H^{n+\frac12}\|_{L^2}^2 .
\end{align*}
With an application of integration by parts, $\mathcal{I}_{5,3}$ becomes
\begin{align*}
\mathcal{I}_{5,3}
&=
\Big(\overline e_\u^{n+\frac12}\times\widetilde\H_h^{n+\frac12} ,\nabla\times(\widecheck\H^{n+\frac12}-\Pi_h\widecheck\H^{n+\frac12}) \Big)
\nonumber\\
&=
\Big(\nabla\times(\overline e_\u^{n+\frac12}\times\widetilde\H_h^{n+\frac12} ), \widecheck\H^{n+\frac12}-\Pi_h\widecheck\H^{n+\frac12} \Big) \\
&\le
\big\|\nabla\times(\overline e_\u^{n+\frac12}\times\widetilde\H_h^{n+\frac12} )\big\|_{L^2}\big\|\widecheck\H^{n+\frac12}-\Pi_h\widecheck\H^{n+\frac12}\|_{L^2} \\
&\le
\varepsilon\|\nabla\overline e_\u^{n+\frac12}\|_{L^2}^2+Ch^{2(r+1)} ,
\end{align*}
where we have used \eqref{bound-H} and
\begin{align*}
&\big\|\nabla\times\big(\overline e_\u^{n+\frac12}\times\widetilde\H_h^{n+\frac12} \big)\big\|_{L^2}
\nonumber\\
&=
\big\|\big(\nabla\cdot\widetilde\H_h^{n+\frac12}\big)\overline e_\u^{n+\frac12}
-\big(\nabla\cdot\overline e_\u^{n+\frac12}\big) \widetilde\H_h^{n+\frac12}
+\big(\widetilde\H_h^{n+\frac12}\cdot\nabla\big)\overline e_\u^{n+\frac12}
-\big(\overline e_\u^{n+\frac12} \cdot\nabla\big)\widetilde\H_h^{n+\frac12} \big\|_{L^2}
\nonumber\\
&\le
C\|\widetilde\H_h^{n+\frac12}\|_{W^{1,3}}\|\overline e_\u^{n+\frac12}\|_{L^6}
+C\|\nabla\overline e_\u^{n+\frac12}\|_{L^2}\|\widetilde\H_h^{n+\frac12}\|_{L^\infty} .
\end{align*}
Therefore, the following bound is available for $\mathcal{I}_{5}$:
\begin{align*}
\mathcal{I}_5
&\le
Ch^{2(r+1)}+\varepsilon\|\nabla\overline e_\u^{n+\frac12}\|_{L^2}^2+C\|\widetilde e_\H^{n+\frac12}\|_{L^2}^2
-\mu\Big(\widetilde\H_h^{n+\frac12}\times(\nabla\times\widecheck e_\H^{n+\frac12}) ,\overline e_\u^{n+\frac12} \Big) .
\end{align*}
A bound for the truncation error term $\mathcal{I}_6$ is based on \eqref{Ph-est1} and the regularity assumptions \eqref{reg-asp}:
\begin{align*}
\mathcal{I}_6
&\le
C(\tau^2+h^{r+1})^2
+\varepsilon\|\nabla\overline e_\u^{n+\frac12}\|_{L^2}^2.
\end{align*}
With the above estimates, we obtain the following result from \eqref{est-eu-1}
\begin{align}
&\frac{1}{2\tau}\Big(\|\widehat e_\u^{n+1}\|_{L^2}^2 -\| e_\u^n\|_{L^2}^2 \Big)
+\frac{\nu}{2}\|\nabla\overline e_\u^{n+\frac12}\|_{L^2}^2
-\big(e_p^n,\nabla\cdot\overline e_\u^{n+\frac12} \big)
\nonumber\\
&\le
C\|\widetilde e_\u^{n+\frac12}\|_{L^2}^2
+C({\color{blue}\ell_h}\tau^2+h^{r+1})^2
+C\|\widetilde e_\H^{n+\frac12}\|_{L^2}^2
-\mu\Big(\widetilde\H_h^{n+\frac12}\times(\nabla\times\widecheck e_\H^{n+\frac12}) ,\overline e_\u^{n+\frac12} \Big) .
\label{est-eu-2}
\end{align}

{\it\underline{Step 3:} \, Estimate of the term $-(e_p^n,\nabla\cdot\overline e_\u^{n+\frac12})$ in \eqref{est-eu-2}.} \,
We rewrite \eqref{error-c} as
\begin{align}
\frac{e_\u^{n+1}-\widehat e_\u^{n+1}}{\tau}
=-\frac{1}{2}\nabla_h(e_p^{n+1}-e_p^n) .
\label{error-d-2}
\end{align}
With the above equality and the fact that $\overline e_\u^{n+\frac12}=\frac12(\widehat e_\u^{n+1}+e_\u^n)$, we have
\begin{align}
-\big(e_p^n,\nabla\cdot\overline e_\u^{n+\frac12} \big)
&=
-\frac12\big(e_p^n,\nabla\cdot\widehat e_\u^{n+1} \big)
\quad (\mbox{by \eqref{error-d}})
\nonumber\\
&=
\frac12\big(\nabla_he_p^n, \widehat e_\u^{n+1} \big)
\nonumber\\
&=
\frac{\tau}{4} \big(\nabla_he_p^n ,\nabla_h(e_p^{n+1}-e_p^n) \big)
\quad (\mbox{by \eqref{error-d-2}})
\nonumber\\
&=
\frac{\tau}{8}\Big(\|\nabla_he_p^{n+1}\|_{L^2}^2 - \|\nabla_he_p^{n}\|_{L^2}^2 \Big)
-\frac{\tau}{8}\|\nabla_h(e_p^{n+1}-e_p^n)\|_{L^2}^2
\nonumber\\
&=
\frac{\tau}{8}\Big(\|\nabla_he_p^{n+1}\|_{L^2}^2 - \|\nabla_he_p^{n}\|_{L^2}^2 \Big)
-\frac{1}{2\tau}\|e_\u^{n+1}-\widehat e_\u^{n+1}\|_{L^2}^2 .
\label{est-ep-1}
\end{align}

{\it\underline{Step 4:}} \,
By taking $\l_h=e_\u^{n+1}$ into \eqref{error-c}, we arrive at
\begin{align}
\frac{1}{2\tau}\Big(\|e_\u^{n+1}\|_{L^2}^2 - \|\widehat e_\u^{n+1}\|_{L^2}^2
+\|e_\u^{n+1}-\widehat e_\u^{n+1}\|_{L^2}^2 \Big)
=0,
\label{est-eu-3}
\end{align}
in which \eqref{error-d} has been applied.

{\it\underline{Step 5:}}  \, A summation of \eqref{est-eH-2}, \eqref{est-eu-2}, \eqref{est-ep-1}, and \eqref{est-eu-3} leads to
\begin{align}
&\frac{\mu}{2\tau}\Big(\|e_{\H}^{n+1}\|_{L^2}^2 - \|e_{\H}^{n}\|_{L^2}^2\Big)
+\frac{\mu}{8\tau} \Big(\|e_{\H}^{n+1}-e_{\H}^n\|_{L^2}^2 - \|e_{\H}^{n}-e_{\H}^{n-1}\|_{L^2}^2 \Big) \nonumber\\
&\quad
+\frac{1}{4\sigma}\|\nabla\times\widecheck e_{\H}^{n+\frac12}\|_{L^2}^2
+\frac{1}{4\sigma}\|\nabla\cdot\widecheck e_{\H}^{n+\frac12}\|_{L^2}^2
\nonumber\\
&\quad
+\frac{1}{2\tau}\Big(\|e_\u^{n+1}\|_{L^2}^2-\|e_\u^n\|_{L^2}^2 \Big)
+\frac{\nu}{2}\|\nabla\overline e_\u^{n+\frac12}\|_{L^2}^2
+\frac{\tau}{8}\Big(\|\nabla_h e_p^{n+1}\|_{L^2}^2 - \|\nabla_h e_p^{n}\|_{L^2}^2 \Big)
\nonumber\\
&\le
C\|\widecheck e_\H^{n+\frac12}\|_{L^2}^2
+C\|\widetilde e_\H^{n+\frac12}\|_{L^2}^2
+C\|\widetilde e_\u^{n+\frac12}\|_{L^2}^2
+C({\color{blue}\ell_h}\tau^2+h^{r+1})^2.
\end{align}
An application of discrete Gronwall's inequality indicates that there exists a positive constant $\tau_0$ such that
\begin{align}\label{est-final}
&\|e_\H^{n+1}\|_{L^2}^2 + \tau\sum_{m=1}^n\|\nabla\times\widecheck e_\H^{m+\frac12}\|_{L^2}^2
+ \tau\sum_{m=1}^n\|\nabla\cdot\widecheck e_\H^{m+\frac12}\|_{L^2}^2
\nonumber\\
&\quad
+\|e_\u^{n+1}\|_{L^2}^2 + \tau\sum_{m=1}^n\|\nabla\overline e_\u^{m+\frac12}\|_{L^2}^2
+\tau^2\|\nabla_h e_p^{n+1}\|_{L^2}^2
\le
C({\color{blue}\ell_h}\tau^2+h^{r+1})^2,
\end{align}
if $\tau<\tau_0$. By applying the Cauchy's inequality
\begin{align}
\|\nabla\times\widecheck e_\H^{n+\frac12}\|_{L^2}^2
&\ge
\frac38\|\nabla\times e_\H^{n+1}\|_{L^2}^2 - \frac18\|\nabla\times e_\H^{n-1}\|_{L^2}^2 , \\
\|\nabla\cdot\widecheck e_\H^{n+\frac12}\|_{L^2}^2
&\ge
\frac38\|\nabla\cdot e_\H^{n+1}\|_{L^2}^2 - \frac18\|\nabla\cdot e_\H^{n-1}\|_{L^2}^2 ,
\end{align}
we further get the following result from \eqref{est-final}
\begin{align}
&\|e_\H^{n+1}\|_{L^2}^2 + \tau\sum_{m=1}^n\|\nabla\times e_\H^{m+1}\|_{L^2}^2
+\tau\sum_{m=1}^n\|\nabla\cdot e_\H^{m+1}\|_{L^2}^2
\nonumber\\
&\quad
+\|e_\u^{n+1}\|_{L^2}^2 + \tau\sum_{m=1}^n\|\nabla\overline e_\u^{m+\frac12}\|_{L^2}^2
+\tau^2\|\nabla_h e_p^{n+1}\|_{L^2}^2
\le
C({\color{blue}\ell_h}\tau^2+h^{r+1})^2.
\end{align}
The above estimate implies that the induction assumption \eqref{est-1} could be recovered at $m=n+1$ with $C_0^\star\ge C$.
Thus the mathematical induction is closed. By the projection estimates \eqref{Ph-est1}, \eqref{Pih-est}, and \eqref{Rh-hat2}, the error estimates \eqref{est-error}-\eqref{est-error2} in Theorem \ref{thm-error} follow immediately.
\end{proof}

\begin{remark}
In this work, we focus on the error estimates of the velocity and magnetic fields for a Crank--Nicolson finite element projection method. The error estimate of the pressure may be obtained by using the discrete inf-sup condition.
From the numerical results in Section 4, we can see that the convergence for pressure is consistent with that for $(\tau\sum_{n=2}^N\|\nabla(\overline{\bm u}^{n-\frac12}_h-\overline{\bm u}^{n-\frac12})\|_{L^2}^2)^{\frac12}$.
\end{remark}

%
%
%

\section{Numerical examples}\label{sec:numerical results}
\setcounter{equation}{0}

In this section, we present several numerical examples to illustrate our theoretical results in Theorems \ref{thm-error} and \ref{stability}.
For the sake of simplicity, numerical results are tested for two-dimensional problems in a unit square domain.
All the numerical examples are computed by using FreeFEM++.

\begin{example}\label{example-1}
\rm
First, we consider the MHD equations
\begin{align}
&    \mu\partial_t\H+\sigma^{-1}\nabla\times(\nabla\times\H)
        -\mu\nabla\times(\u\times\H)={\bm g},
        \label{PDEa-2}\\
&
    \partial_t\u+\u\cdot\nabla\u-\nu\Delta\u+\nabla p
        =\f-\mu\H\times(\nabla\times\H),
        \label{PDEb-2}\\
&
    \nabla\cdot\H=0,\quad \nabla\cdot\u=0,\label{PDEc-2}
\end{align}
in $\Omega=[0,1]\times[0,1]$, with the initial and boundary conditions \eqref{initial data-1}-\eqref{BC-1}, where the source terms ${\bm g}$ and $\f$ are chosen correspondingly to the exact solutions
\begin{equation} \label{exactsolution}
  \begin{aligned}
    \u&=t^4\left(
    \begin{aligned}
      \sin^2(\pi x)\sin(2\pi y) \\
      -\sin(2\pi x)\sin^2(\pi y)
    \end{aligned}
    \right),  \\
     \H&=t^4\left(
    \begin{aligned}
      -\sin(2\pi y)\cos(2\pi x) \\
      \sin(2\pi x)\cos(2\pi y)
    \end{aligned}
    \right), \\
    p&=t^4\sin^2(2\pi x)\sin^2(2\pi y)-\frac14.
  \end{aligned}
\end{equation}
For simplicity, all the coefficients $\nu, \sigma, \mu$ in~\eqref{PDEa-2}-\eqref{PDEc-2} are chosen to be $1$, and we take the final time as $T=1$. Note that the above exact solutions $\u$ and $\H$ satisfy the divergence-free conditions and $\nabla p|_{\partial\Omega}={\bm 0}$.

We solve the MHD system \eqref{PDEa-2}-\eqref{PDEc-2} by the modified Crank--Nicolson finite element projection scheme \eqref{scheme-a}-\eqref{scheme-d} with a cubic finite element approximation for
$\H$ and $\u$, and a quadratic finite element approximation for $p$.
Here, the system \eqref{scheme-a}-\eqref{scheme-b} is a coupled, but linearized system for ${\bm H}_h^{n+1}$ and $\widehat {\bm u}_h^{n+1}$. Thus we can solve the linear system directly by using a sparse solver in FreeFEM++.
To investigate the convergence rate in time, we first choose $\tau=T/N$ with $N=40,80,160$, with the spatial mesh size $h=2\tau$.
In Examples 4.1 and 4.2, we compute the following errors:
\begin{align*}
&
e({\bm u}_h):=\|{\bm u}_h^N-{\bm u}^N\|_{L^2},\quad
e({\bm H}_h):=\|{\bm H}_h^N-{\bm H}^N\|_{L^2},\quad
e({\bm p}_h):=\|{\bm p}_h^N-{\bm p}^N\|_{L^2},\quad \\
&
e(\nabla{\bm u}_h):=\Big(\tau\sum_{n=2}^N\|\nabla\overline{\bm u}_h^{n-\frac12}-\nabla\overline{\bm u}^{n-\frac12}\|_{L^2}^2 \Big)^{\frac12},\quad
e(\nabla\times{\bm H}_h):=\Big(\tau\sum_{n=2}^N\|\nabla {\bm H}_h^{n}-\nabla{\bm H}^{n}\|_{L^2}^2 \Big)^{\frac12}.\quad
\end{align*}
The numerical results at time $T=1$ are presented in Table \ref{table1}, which indicate that the proposed scheme has second-order convergence in time.

\begin{table}[htb]
\begin{center}
\begin{tabular}{c|c|c|c|c|c}
\hline
\hline
$\tau$      &$e({\bm u}_h)$         &$e({\bm H}_h)$         &$e({\bm p}_h)$         &$e(\nabla{\bm u}_h)$   &$e(\nabla\times{\bm H}_h)$  \\ \hline
$1/40$      &$5.971\times10^{-4}$   &$1.862\times10^{-3}$   &$3.136\times10^{-2}$   &$1.167\times10^{-2}$   &$7.659\times10^{-3}$ \\
$1/80$      &$1.495\times10^{-4}$   &$4.695\times10^{-4}$   &$8.487\times10^{-3}$   &$3.193\times10^{-3}$   &$1.906\times10^{-3}$\\
$1/160$     &$3.741\times10^{-5}$   &$1.179\times10^{-4}$   &$2.167\times10^{-3}$   &$8.176\times10^{-4}$   &$4.755\times10^{-4}$ \\
\hline
Order       &2.00                   &1.99                   &1.93                   &1.92                   &2.01\\
\hline
\hline
\end{tabular}
\vspace{.1in}
\caption{Temporal convergence at $T=1$}
\label{table1}
\end{center}
\end{table}

Then we solve the problem \eqref{PDEa-2}-\eqref{PDEc-2} by the modified Crank--Nicolson FEM scheme \eqref{scheme-a}-\eqref{scheme-d} with a sufficiently small temporal step size $\tau=1/5000$, to focus on the spatial convergence rate. Again, a cubic finite element approximation for $\H$ and $\u$ is applied, combined with a quadratic finite element approximation for $p$. Here, we take $h=1/10,1/20,1/40$. Numerical results at $T=1$ are presented in Table \ref{table2}. It is observed that the spatial convergences are of optimal orders, which are consistent with the theoretical analysis in Theorem \ref{thm-error}.
\begin{table}[htb]
\begin{center}
\begin{tabular}{c|c|c|c|c|c}
\hline
\hline
$h$         &$e({\bm u}_h)$         &$e({\bm H}_h)$         &$e({\bm p}_h)$         &$e(\nabla{\bm u}_h)$   &$e(\nabla\times{\bm H}_h)$  \\ \hline
$1/10$      &$1.312\times10^{-4}$   &$2.481\times10^{-4}$   &$3.344\times10^{-3}$   &$2.386\times10^{-3}$   &$1.485\times10^{-3}$ \\
$1/20$      &$8.184\times10^{-6}$   &$1.507\times10^{-5}$   &$5.186\times10^{-4}$   &$3.153\times10^{-4}$   &$1.727\times10^{-4}$\\
$1/40$      &$5.202\times10^{-7}$   &$9.320\times10^{-7}$   &$7.097\times10^{-5}$   &$4.068\times10^{-5}$   &$2.086\times10^{-5}$ \\
\hline
Order       &3.99                   &4.03                   &2.78                   &2.94                   &3.08\\
\hline
\hline
\end{tabular}
\vspace{.1in}
\caption{Spatial convergence at $T=1$}
\label{table2}
\end{center}
\end{table}
\end{example}

\begin{example}
Second, we solve the MHD model \eqref{PDEa-2}-\eqref{PDEc-2} by the scheme  \eqref{scheme-a}-\eqref{scheme-d} with the source terms ${\bm g}$ and ${\bm f}$ chosen correspondingly to the following exact solutions
\begin{equation} \label{exactsolution2}
  \begin{aligned}
    \u&=t^4\left(
    \begin{aligned}
      \sin^2(\pi x)\sin(2\pi y) \\
      -\sin(2\pi x)\sin^2(\pi y)
    \end{aligned}
    \right),  \\
     \H&=t^4\left(
    \begin{aligned}
      -\sin(2\pi y)\cos(2\pi x) \\
      \sin(2\pi x)\cos(2\pi y)
    \end{aligned}
    \right), \\
    p&=t^4\sin(2\pi x)\sin(2\pi y),
  \end{aligned}
\end{equation}
where we can see that $\nabla p|_{\partial\Omega}\neq{\bm 0}$.
Here, we choose the same time step sizes and mesh sizes as those used in Example \ref{example-1}, and compute the errors and convergence.
The numerical results are given in Tables \ref{table3} and \ref{table4}, which are consistent with the theoretical results in Theorem \ref{thm-error}.

\begin{table}[htb]
\begin{center}
\begin{tabular}{c|c|c|c|c|c}
\hline
\hline
$\tau$      &$e({\bm u}_h)$         &$e({\bm H}_h)$         &$e({\bm p}_h)$         &$e(\nabla{\bm u}_h)$   &$e(\nabla\times{\bm H}_h)$  \\ \hline
$1/40$      &$5.774\times10^{-3}$   &$1.912\times10^{-3}$   &$3.370\times10^{-2}$   &$1.210\times10^{-2}$   &$7.766\times10^{-3}$ \\
$1/80$      &$1.411\times10^{-4}$   &$4.822\times10^{-4}$   &$1.085\times10^{-2}$   &$3.949\times10^{-3}$   &$1.932\times10^{-3}$\\
$1/160$     &$3.500\times10^{-5}$   &$1.211\times10^{-4}$   &$3.563\times10^{-3}$   &$1.301\times10^{-3}$   &$4.819\times10^{-4}$ \\
\hline
Order       &2.02                   &1.99                   &1.62                   &1.61                   &2.01\\
\hline
\hline
\end{tabular}
\vspace{.1in}
\caption{Temporal convergence at $T=1$}
\label{table3}
\end{center}
\end{table}

\begin{table}[htb]
\begin{center}
\begin{tabular}{c|c|c|c|c|c}
\hline
\hline
$h$         &$e({\bm u}_h)$         &$e({\bm H}_h)$         &$e({\bm p}_h)$         &$e(\nabla{\bm u}_h)$   &$e(\nabla\times{\bm H}_h)$  \\ \hline
$1/10$      &$1.304\times10^{-4}$   &$2.481\times10^{-4}$   &$1.996\times10^{-3}$   &$2.306\times10^{-3}$   &$1.485\times10^{-3}$ \\
$1/20$      &$7.878\times10^{-6}$   &$1.507\times10^{-5}$   &$2.504\times10^{-4}$   &$2.859\times10^{-4}$   &$1.727\times10^{-4}$\\
$1/40$      &$4.867\times10^{-7}$   &$9.321\times10^{-7}$   &$3.112\times10^{-5}$   &$3.560\times10^{-5}$   &$2.086\times10^{-5}$ \\
\hline
Order       &4.02                   &4.03                   &3.00                   &3.01                   &3.08\\
\hline
\hline
\end{tabular}
\vspace{.1in}
\caption{Spatial convergence at $T=1$}
\label{table4}
\end{center}
\end{table}

\end{example}

\begin{example}
\rm
Third, we test the energy stability of the proposed scheme by solving the problem \eqref{PDEa}-\eqref{BC-1} in $\Omega=[0,1]\times[0,1]$ with $\J=\f={\bm 0}$ ($\J$ denotes a scalar function in $\mathbb{R}^2$) and $T=1$. Here, all the coefficients $\nu, \sigma, \mu$ in \eqref{PDEa}-\eqref{BC-1} are chosen to be $1$ and the initial values are chosen as:
\begin{equation} \label{initial-date-2}
  \begin{aligned}
    \u_0&=\left(
    \begin{aligned}
      \sin^2(\pi x)\sin(2\pi y) \\
      -\sin(2\pi x)\sin^2(\pi y)
    \end{aligned}
    \right),  \\
     \H_0&=\left(
    \begin{aligned}
      -\sin(2\pi y)\cos(2\pi x) \\
      \sin(2\pi x)\cos(2\pi y)
    \end{aligned}
    \right), \\
    p_0&=\sin(2\pi x)\sin(2\pi y).
  \end{aligned}
\end{equation}

We solve the problem by the proposed scheme \eqref{scheme-a}-\eqref{scheme-d} with a quadratic finite element approximation for $\H$ and $\u$, combined with a linear finite element approximation for $p$.
The time step size and spatial mesh size are chosen as $\tau=1/10$ and $h=1/50$, respectively. We define the energy function as $E_h^n:=\|\u_h^n\|_{L^2}^2+\|\H_h^n\|_{L^2}^2+\frac14\|\H_h^n-\H_h^{n-1}\|_{L^2}^2+\frac{\tau^2}{4}\|\nabla_hp_h^n\|_{L^2}^2$. The energy evolution curve, up to the final time $T=10$, is displayed in Figure~\ref{fig2}, which clearly indicates the energy dissipation property, consistent with the theoretical result in Theorem \ref{stability}.
\begin{figure}[htpb]
  \centering
      \includegraphics[width=3.5in]{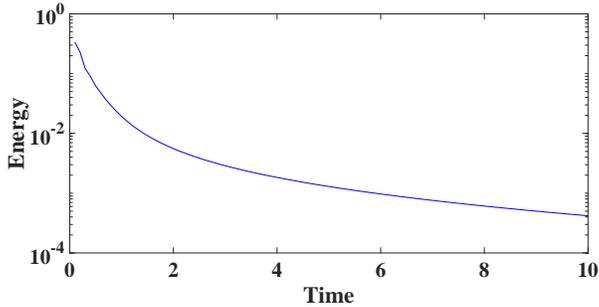}\\
  \caption{Energy of the MHD system at each time level}
\label{fig2}
\end{figure}

\end{example}

\section{Conclusion}   \label{sec:conclusion}

In this paper, we propose a fully discrete decoupled finite element projection method for the incompressible magnetohydrodynamic equations~\eqref{PDEa}-\eqref{PDEc}.
The primary difficulties are associated with the nonlinear and coupled nature of the problem.
In this work, a modified Crank--Nicolson method is used for the temporal discretization, and appropriate semi-implicit treatments are adopted for the approximation of the fluid convection term and two coupled terms. Then a linear system with variable coefficients is presented and its unique solvability is theoretically proved by the fact that the corresponding homogeneous equations only admit zero solutions. One prominent advantage of the scheme is associated with a decoupling approach in the Stokes solver, which computes an intermediate velocity field based on the pressure gradient at the previous time step, and then enforces the incompressibility constraint via the Helmholtz decomposition of the intermediate velocity field. As a result, this decoupling approach greatly reduces the computation of the MHD system. Furthermore, the energy stability analysis and error estimates in the discrete $L^\infty(0,T;L^2)$ norm are provided for the scheme, in which the decoupled Stokes solver needs to be carefully estimated. Several numerical examples are presented to demonstrate the robustness and accuracy of the proposed scheme. The extension of the energy stable projection methods and its error estimates to two-phase MHD models will be investigated in the future.


\vspace{.1in}
\textbf{Acknowledgement.}
The authors would like to thank the anonymous referees for their valuable comments and suggestions.

\vspace{.1in}
{\bf Funding.}
The research of C.~Wang was supported in part by NSF DMS-2012669.
The research of J.~Wang was supported in part by NSFC-12071020 and NSFC-U1930402.
The research of Z.~Xia was supported in part by NSFC-11871139.
The research of L.~Xu was supported in part by NSFC-11771068 and NSFC-12071060.

	\bibliographystyle{plain}
	\bibliography{MHD}




\end{document}